\newtheorem{assumption}{Assumption}
\newtheorem{theorem}{Theorem}
\newtheorem{corollary}{Corollary}
\newtheorem{proposition}{Proposition}
\newtheorem{remark}{Remark}
\newtheorem{example}{Example}
\def\R{{\rm I\!R}}
\def\P{{\frak P}}
\def\D{{\frak D}}
\def\Argmin{\mathop{\rm Arg\,min}}
\def\argmin{\mathop{\rm arg\,min}}
\def\hat{\widehat}
\def\tilde{\widetilde}
\def\ra{\rangle}
\def\la{\langle}
\def\st{\stackrel}
\def\Argmin{\mathop{\rm Arg\,min}}
\def\argmin{\mathop{\rm arg\,min}}
\def\disp{\displaystyle}
\def\tto{\;{\lower 1pt \hbox{$\rightarrow$}}\kern -10pt
\hbox{\raise 2pt \hbox{$\rightarrow$}}\;}
\title{\sf Peaceman-Rachford splitting for a class of nonconvex optimization problems}
\author{
Guoyin Li \thanks{Department of Applied
Mathematics, University of New South Wales, Sydney 2052, Australia.
E-mail: {g.li@unsw.edu.au}. This author was partially supported by a research grant from Australian Research Council.}
\and
Tianxiang Liu\thanks{Department of Applied Mathematics, the Hong Kong Polytechnic University, Hong Kong. This author was supported partly by the AMSS-PolyU Joint Research Institute Postdoctoral Scheme. E-mail: {tiskyliu@polyu.edu.hk}.}
\and
Ting Kei Pong \thanks{Department of Applied Mathematics, the Hong Kong Polytechnic University, Hong Kong.
This author's work was supported partly by Hong Kong Research Grant Council PolyU253008/15p. E-mail: {tk.pong@polyu.edu.hk}.}
}
\date{January 9th, 2017}
\begin{document}
\maketitle
\begin{abstract}
We study the applicability of the Peaceman-Rachford (PR) splitting method for solving nonconvex optimization problems.
 When applied to minimizing the sum of a strongly convex Lipschitz differentiable function and a proper closed function, we show that if the
  strongly convex function has a large enough strong convexity modulus and the step-size parameter is chosen below a threshold that is computable,
   then any cluster point of the sequence generated, if exists, will give a stationary point of the optimization problem.
    We also give sufficient conditions guaranteeing boundedness of the sequence generated. We then discuss one way to split the objective so
     that the proposed method can be suitably applied to solving optimization problems with a coercive objective that is the sum of a (not necessarily strongly)
     convex Lipschitz differentiable function and a proper closed function; this setting covers a large
      class of nonconvex feasibility problems and constrained least squares problems. Finally, we illustrate the proposed algorithm numerically.
\end{abstract}

\section{Introduction}

Consider the following optimization problem with competing structure:
  \begin{equation}\label{P0}
    \min_u \ f(u) + g(u),
  \end{equation}
  where $f$ and $g$ are proper closed possibly nonconvex functions.  Optimization problems of this form arise in many important modern applications
  such as signal processing, machine learning and statistics \cite{BDE,Tib96,BvdBSC13,FanJ}.  A typical application of \eqref{P0} is to solve some ill-posed inverse problems
  where the function $f$ represents the data fitting term and the function $g$ is the regularization term.
  To solve problems with competing structures, an important and powerful class of algorithms is the class of splitting methods. In these methods, the objective function is decomposed into simpler individuals which are then processed separately in the subproblems. Two classical splitting methods in the literature are the Douglas-Rachford (DR) splitting method \cite{PDE,LionsM79,Eckstein_Bertsekas} and the Peaceman-Rachford (PR) splitting method \cite{PR55,LionsM79}.

 The PR splitting method was originally introduced in \cite{PR55} for solving linear heat flow equations, and was later generalized to deal with nonlinear equations in \cite{LionsM79}.
 In the case when $f$ and $g$ are both convex, the PR splitting method can be described conveniently by the following update:
  \begin{equation}\label{PRsplitting}
    x^{t+1} = (2{\rm prox}_{\gamma g} - I)\circ(2{\rm prox}_{\gamma f} - I)(x^t),
  \end{equation}
  where $I$ is the identity mapping, $\gamma > 0$ and
  \begin{equation*}
  {\rm prox}_{\gamma h}(z) := \Argmin_u\left\{\gamma h(u) + \frac12 \|u - z\|^2\right\},
  \end{equation*}
  i.e., the set of minimizers of the problem $\min\limits_u  \gamma h(u) + \frac12 \|u - z\|^2$;
  we note that this set is a singleton when $h$ is convex.
Although the PR splitting method can be faster than the DR splitting method (see, for example, \cite{Boyd2015} and Example \ref{ex:N1} in Appendix), the PR splitting method was not as popular as the DR splitting method. This is also witnessed by the fact that the PR splitting method is not discussed nor mentioned in the recent monograph \cite{BauCom11} on operator splitting methods. One of the
main reasons for the unpopularity is that, even in the convex settings, the PR splitting method is not convergent in general.
To guarantee convergence, typically one would require either the operator $(2{\rm prox}_{\gamma f} - I)$ or $(2{\rm prox}_{\gamma g} - I)$ to be
a {\it contraction mapping}. In applications where $f$, $g$ are both convex, this requirement typically needs $f$ or $g$ to be strongly convex,
which largely limits the applicability of the PR splitting method; see, for example, \cite{Comb09,LionsM79}.
In contrast, under a commonly used constraint qualification which can be easily satisfied, the DR splitting method converges in the convex case \cite[Theorem 20]{Comb12}. Moreover, recently, it has been shown in \cite{LiPong14_2} that the DR splitting method can be adapted to a nonconvex setting with global convergence guaranteed under some assumptions.
This broadens the applicability of the DR splitting method to cover many nonconvex feasibility problems and many important nonconvex optimization problems arising in statistical machine learning such as the $\ell_{1/2}$ regularized least squares problem.

In this paper, to broaden the applicability of the PR splitting method, we extend it to a nonconvex setting.
By constructing a merit function which captures the progress of the PR splitting method, we extend the global convergence of the PR splitting method from the known convex setting to the case where the objective function can be decomposed as the sum of a
strongly convex Lipschitz differentiable function and a nonconvex function, under suitable assumptions.
As a by-product, this extension also allows us to establish the global convergence
and iteration complexity of a new PR splitting method for convex optimization problems in the {\em absence} of strong convexity. The underlying intuitive idea is that one can decompose a non-strongly convex function $F + G$ into the sum of a strongly convex function $f=F+\gamma \|\cdot\|^2$ and a nonconvex function $g=G-\gamma\|\cdot\|^2$, if a $\gamma > 0$ can be chosen so that $f$ is strongly convex.

  The contributions of this paper are two-fold. First, we establish that, for the sequence generated by the PR splitting method applied to minimizing the sum of a strongly convex Lipschitz differentiable function and a proper closed function, if the strongly convex function has a sufficiently large strong convexity modulus and the step-size parameter is chosen below a threshold that is computable, then any cluster point, if exists, gives a stationary point of the optimization problem. We also provide sufficient conditions to guarantee boundedness of the sequence generated. To our knowledge, this is the first work that studies the convergence of the PR splitting method for nonconvex optimization problems. Second, we demonstrate how the method can be suitably applied to minimizing a coercive function $F+G$, where $G$ is a proper closed function, and $F$ is convex Lipschitz differentiable but {\em not} necessarily strongly convex. Even in the case when $G$ is also convex, it was previously unknown in the literature how the PR splitting method can be suitably applied to solving it. Our study largely broadens the applicability of the PR splitting method. We also discuss global iteration complexity of this new PR splitting method under the additional assumption that $G$ is convex, and establish {\em global} linear convergence of the sequence generated if $F+G$ is further assumed to be strongly convex.

  The rest of the paper is organized as follows. In Section~\ref{sec2}, we fix the notation and recall some basic definitions which will be used throughout this paper. In Section~\ref{sec3}, we establish the convergence of the PR splitting method for nonconvex optimization problems where the objective function can be decomposed as the sum of a strongly convex function and a proper closed function, under suitable assumptions. In Section~\ref{sec4}, we demonstrate how the PR splitting method can be applied in the absence of strong convexity. In Section~\ref{sec5}, as applications, we illustrate how the PR splitting method can be applied to solving two important classes of nonconvex optimization problems that arise in the area of statistics and machine learning: constrained least squares
  problem and feasibility problems. We also demonstrate our approach numerically. Our concluding remarks are in Section~\ref{sec6}. Finally, in the Appendix, we provide simple and concrete examples illustrating
  the different behaviors of the classical PR splitting method, the classical DR splitting method and our proposed PR splitting method.

\subsection{Notation}\label{sec2}

In this paper, the $n$-dimensional Euclidean space is denoted by $\R^n$, with the associated inner product denoted by $\langle\cdot,\cdot\rangle$
and the induced norm denoted by $\|\cdot\|$.
For an extended-real-valued function $f:\R^n \to (-\infty,\infty]$, we say that $f$ is proper if it is never $-\infty$ and its domain, ${\rm dom}f:=\{x \in \R^n: f(x)<+\infty\}$, is nonempty.
Such a function is said to be closed if it is lower semicontinuous.
For a proper function $f$, we let $z\st{f}{\to}x$ denote $f(z)\to f(x)$ and $z\to x$.
The limiting {\em subdifferential} of $f$ at $x\in\mathrm{dom}\,f$ is defined by \cite{Rock98}
\begin{equation}\label{ls}
\partial f(x):=\left\{v\in\R^n :\;\exists x^t\st{f}{\to}x,\;v^t\to v\;\mbox{ with }\disp\liminf_{z\to x^t}\frac{f(z)-f(x^t)-\la v^t,z-x^t\ra}{\|z-x^t\|}\ge 0\mbox{ for each }t\right\}.
\end{equation}
From the above definition, one immediately obtains the following robustness property:
\begin{equation}\label{outersemi}
  \left\{v\in \R^n:\; \exists x^t \st{f}{\to}x,\; v^t\to v\;, v^t\in \partial f(x^t)\right\} \subseteq \partial f(x).
\end{equation}
The subdifferential \eqref{ls} reduces to the derivative of $f$ (denoted by $\nabla f$) if $f$ is continuously differentiable, and the classical subdifferential in convex analysis if $f$ is convex (see, for example, \cite[Proposition~8.12]{Rock98}).
For a function $f$ having more than one group of variables, we let $\partial_x f$ (resp., $\nabla_x f$) denote the subdifferential (resp., derivative) of $f$ with respect to the variable $x$.

We say that a function $f$ is a {\em strongly convex function} with modulus $\sigma>0$ if $f-\frac{\sigma}{2}\|\cdot\|^2$ is a convex function. A function $f$ is said to be coercive if $\liminf\limits_{\|x\|\to\infty}f(x) = \infty$.
%
%
For a nonempty closed set $S \subseteq \R^n$, its indicator function $\delta_S$ is defined by
\[
\delta_S(x)=\begin{cases}
0 & \mbox{ if }  x \in S, \\
+\infty & \mbox{ if }  x \notin S.
\end{cases}
\]
We use the notation $d_S(x)$ or ${\rm dist}(x,S)$ to denote the distance from an $x\in \R^n$ to $S$, i.e., $d_S(x):= \inf_{y\in S}\|x - y\|$. Moreover, we use
$P_S(x)$ to denote the points in $S$ that are closest to $x$: note that $P_S(x)$ is a singleton set if $S$ is, in addition, convex.

Finally, for an optimization problem $\displaystyle \min_{x \in \R^n} f(x)$, we use $\displaystyle \Argmin_{x} f(x)$ to denote the set consisting of all
its minimizers. If $\displaystyle \Argmin_{x} f(x)$ turns out to be a singleton, we simply denote it as $\displaystyle \argmin_{x} f(x)$.


\section{Peaceman-Rachford splitting for structured nonconvex problems}\label{sec3}

Recall that the class of problems we consider is
  \begin{equation}\label{P1}
    \min_u \ f(u) + g(u),
  \end{equation}
  where $f$ and $g$ are proper closed possibly nonconvex functions. As discussed in the introduction, even in the case when both $f$ and $g$ are convex, typically one would need $f$ (or $g$) to be strongly convex to guarantee convergence of the PR splitting method. Moreover, we recall that the Lipschitz differentiability of $f$ played an important role in the recent convergence analysis of the closely related DR splitting method in \cite{LiPong14_2} for \eqref{P1} in the nonconvex settings.
  Motivated by these, we make the following blanket assumption on $f$ throughout this paper.
  \begin{assumption}[{{\bf Blanket assumption on $f$}}]
    The function $f$ is strongly convex with a strong convexity modulus at least $\sigma > 0$, and is Lipschitz differentiable so that $\nabla f$ has a Lipschitz continuity modulus at most $L > 0$.
  \end{assumption}
  Notice that the proximal mapping ${\rm prox}_{\gamma f}(z)$ of a strongly convex function $f$ is well defined for any $\gamma > 0$ at any point $z$. Thus, in order for the iterates in \eqref{PRsplitting} to be well defined, we only need to make additionally the following blanket assumption on $g$ in this paper.
  \begin{assumption}[{{\bf Blanket assumption on $g$}}]\label{assumg}
    The function $g$ is proper closed with a nonempty proximal mapping ${\rm prox}_{\gamma g}(z)$ for any $z$ and for the $\gamma > 0$ we use in the algorithm.
  \end{assumption}

Under the blanket assumptions, we consider the following adaptation of the PR splitting method to solve the possibly nonconvex problem \eqref{P1}, which can be easily shown to be equivalent to \eqref{PRsplitting} in the case when $f$ and $g$ are convex (so that the proximal mappings are single-valued).\\ \

\fbox{\parbox{5.7 in}{
\begin{description}
\item {\bf PR splitting method}

\item[Step 0.] Input $x^0$ and $\gamma > 0$.

\item[Step 1.] Set
\begin{equation}\label{scheme}
\left\{
\begin{split}
&y^{t+1}=\argmin_{y} \left\{f(y) + \frac{1}{2\gamma}\|y - x^t\|^2\right\}, \\
&z^{t+1}\in\Argmin_{z} \left\{g(z) + \frac{1}{2\gamma}\|2y^{t+1} - x^t - z\|^2\right\},\\
&x^{t+1}=x^t+2(z^{t+1}- y^{t+1}).
\end{split}
\right.
\end{equation}

\item[Step 2.] If a termination criterion is not met, go to Step 1.
\end{description}
}}
\vspace{.1 in}

Our convergence analysis follows a similar line of arguments (with some intricate modifications) for showing convergence for the Douglas-Rachford splitting method as in our recent work \cite{LiPong14_2}, and has to make extensive use of the following merit function:
\begin{eqnarray}
    \P_\gamma(y,z,x) &:=& f(y) + g(z) - \frac3{2\gamma}\|y - z\|^2 + \frac1\gamma \langle x-y,z-y\rangle \label{Ddef} 	\\
    & =& \D_\gamma(y,z,x) - \frac1\gamma\|y - z\|^2, \nonumber
\end{eqnarray}
where $\D_\gamma$ is the so-called Douglas-Rachford merit function given by $\D_\gamma(y,z,x)=f(y) + g(z) - \frac1{2\gamma}\|y - z\|^2 + \frac1\gamma \langle x-y,z-y\rangle$ (see \cite[Definition~2.1]{LiPong14_2}), motivated by \cite[Eq.~35]{PaStBe14}.

Before proceeding, we make two important observations. First, 
it is not hard to see that the merit function $\P_\gamma$ can alternatively be written as
  \begin{equation}\label{Drelation}
  \begin{split}
    \P_\gamma(y,z,x) & = f(y) + g(z) + \frac1{2\gamma}\|2y - z - x\|^2 - \frac{1}{2\gamma}\|x-y\|^2 - \frac2{\gamma}\|y - z\|^2\\
    & = f(y) + g(z) + \frac1{2\gamma}(\|x - y\|^2 - \|x - z\|^2-2\|y - z\|^2),
  \end{split}
  \end{equation}
  where the first relation follows from the elementary relation $\langle u,v\rangle = \frac12 (\|u + v\|^2 - \|u\|^2 - \|v\|^2)$ applied with $u = x-y$ and $v = z -y$ in \eqref{Ddef}, while the second relation is obtained by using the elementary relation $\langle u,v\rangle = \frac12 (\|u\|^2 + \|v\|^2 - \|u - v\|^2)$ in \eqref{Ddef} with $u = x-y$ and $v = z -y$.
We will make use of these equivalent formulations in the convergence analysis.
Second, we note by using the optimality conditions for the $y$ and $z$-updates in \eqref{scheme} that:
\begin{equation}\label{fyupdate}
  \begin{split}
    0 & = \nabla f(y^{t+1}) + \frac{1}{\gamma}(y^{t+1} - x^t),\\
    0 & \in \partial g(z^{t+1}) + \frac{1}{\gamma}(z^{t+1} - y^{t+1}) - \frac{1}{\gamma}(y^{t+1} - x^t),
  \end{split}
\end{equation}
where we made use of the subdifferential calculus rule \cite[Exercise~8.8]{Rock98}.
Consequently, for all $t\ge 1$,
\begin{equation}\label{optcond}
  0 \in  \nabla f(y^t) + \partial g(z^t) + \frac{1}{\gamma}(z^t - y^t).
\end{equation}
To establish convergence and characterize the cluster point of the sequence generated, we will subsequently show that $\lim_{t\rightarrow \infty}\|z^{t} - y^{t}\| = 0$ and that $g$ is ``continuous" at the cluster point along the sequence generated.

We are now ready to state and prove a convergence result for the PR splitting method \eqref{scheme}. We would like to point out that our proof is following exactly the same line of arguments as \cite[Theorem~1]{LiPong14_2}. However, there are two crucial differences. First, we now make use of the merit function \eqref{Ddef} in place of the Douglas-Rachford merit function. Second, as we will see in the upper estimate in \eqref{useful2}, the factor of $\gamma$ in the denominator is canceled, and thus the strong convexity modulus $\sigma$ comes into play in establishing the non-increasing property of the sequence $\{\P_\gamma(y^t,z^t,x^t)\}_{t\ge 1}$.

\begin{theorem}[{{\bf Global subsequential convergence}}]\label{thm1}
  Suppose that $3\sigma > 2L$ and the parameter $\gamma$ is chosen so that
  \begin{equation}\label{gammacond}
  0 <  \gamma < \frac{3\sigma  -2L}{L^2}.
  \end{equation}
  Then the sequence $\{\P_\gamma(y^t,z^t,x^t)\}_{t\ge 1}$ is nonincreasing. Moreover, if a cluster point $(y^*,z^*,x^*)$ of the sequence exists, then we have
  \begin{equation}\label{cond1}
    \lim_{t\rightarrow \infty}\|x^{t+1} - x^t\| = 2\lim_{t\rightarrow \infty}\|z^{t+1} - y^{t+1}\| = 0,
  \end{equation}
  the cluster point satisfies $z^* = y^*$, and
  \[
  0\in \nabla f(z^*) + \partial g(z^*).
  \]
\end{theorem}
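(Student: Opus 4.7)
The plan is to establish monotonicity of $\{\P_\gamma(y^t,z^t,x^t)\}_{t\ge 1}$ with a summable decrement $\|y^{t+1}-y^t\|^2$, from which \eqref{cond1} and the stationarity of any cluster point will follow by standard limit passages. The starting point is the first-order optimality from the $y$-update, $x^t=y^{t+1}+\gamma\nabla f(y^{t+1})$, which, combined with $x^{t+1}=x^t+2(z^{t+1}-y^{t+1})$, collapses the $x$-dependence in \eqref{Ddef} to the compact formula
\[
\P_\gamma(y^{t+1},z^{t+1},x^{t+1}) = f(y^{t+1})+g(z^{t+1})+\langle\nabla f(y^{t+1}),z^{t+1}-y^{t+1}\rangle+\frac{1}{2\gamma}\|z^{t+1}-y^{t+1}\|^2,
\]
with the analogous identity at index $t$ holding for $t\ge 1$. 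This is the cancellation of $\gamma$ in the denominator that the authors mention, replacing $1/\gamma$ by the $\sigma$- and $L$-controlled gradient $\nabla f$.

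Since the $z$-update minimizes $z\mapsto g(z)+\langle\nabla f(y^{t+1}),z-y^{t+1}\rangle+\frac{1}{2\gamma}\|z-y^{t+1}\|^2$ (it is the proximal gradient step from $y^{t+1}$), substituting $z^t$ for $z^{t+1}$ in the formula above yields
\[
\P_\gamma(y^{t+1},z^{t+1},x^{t+1}) \le f(y^{t+1})+g(z^t)+\langle\nabla f(y^{t+1}),z^t-y^{t+1}\rangle+\frac{1}{2\gamma}\|z^t-y^{t+1}\|^2.
\]
Subtracting the formula for $\P_\gamma(y^t,z^t,x^t)$ cancels the $g$-values, and applying $\sigma$-strong convexity at $y^{t+1}$ together with the $L$-descent lemma at $y^t$, both evaluated at the test point $z^t$, cancels $f(z^t)$ and leaves
\[
\P_\gamma(y^{t+1},z^{t+1},x^{t+1})-\P_\gamma(y^t,z^t,x^t) \le \Bigl(\frac{1}{2\gamma}-\frac{\sigma}{2}\Bigr)\|z^t-y^{t+1}\|^2 - \Bigl(\frac{1}{2\gamma}-\frac{L}{2}\Bigr)\|z^t-y^t\|^2.
\]

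The main obstacle is to convert this into $-C\|y^{t+1}-y^t\|^2$ with $C>0$ under \eqref{gammacond}. The required bridge is $2(z^t-y^t)=x^t-x^{t-1}=(y^{t+1}-y^t)+\gamma(\nabla f(y^{t+1})-\nabla f(y^t))$, obtained by subtracting two consecutive $y$-optimality conditions. Setting $u=y^{t+1}-y^t$ and $v=\nabla f(y^{t+1})-\nabla f(y^t)$, this expresses $z^t-y^t=\frac{1}{2}(u+\gamma v)$ and $z^t-y^{t+1}=\frac{1}{2}(\gamma v-u)$. Combined with $\langle u,v\rangle\ge\sigma\|u\|^2$ and $\|v\|\le L\|u\|$, the right-hand side above becomes a quadratic in $\|u\|^2$, $\|v\|^2$, and $\langle u,v\rangle$ whose coefficient of $\|u\|^2$ must, by careful algebra, be shown to be strictly negative precisely when $3\sigma>2L$ and $\gamma<(3\sigma-2L)/L^2$, yielding the desired decrement.

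Armed with this decrement, the remaining claims follow by limit arguments. Lower-boundedness of $\P_\gamma$ along the convergent subsequence (via the compact formula, continuity of $f$, and lower-semicontinuity of $g$) together with monotonicity forces $\{\P_\gamma(y^t,z^t,x^t)\}$ to converge, so telescoping yields $\sum_t\|y^{t+1}-y^t\|^2<\infty$ and $\|y^{t+1}-y^t\|\to 0$. The $y$-optimality identity then gives $\|x^t-x^{t-1}\|\le(1+\gamma L)\|y^{t+1}-y^t\|\to 0$, and since $x^{t+1}-x^t=2(z^{t+1}-y^{t+1})$, both statements in \eqref{cond1} and the identity $y^*=z^*$ follow. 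For the final stationarity conclusion, setting $z=z^*$ in the $z$-update optimality and passing to the limit along $t_k$ (both $2y^{t_k+1}-x^{t_k}-z^{t_k+1}$ and $2y^{t_k+1}-x^{t_k}-z^*$ converge to $-\gamma\nabla f(z^*)$, so the quadratic terms cancel) gives $\limsup_k g(z^{t_k+1})\le g(z^*)$, and hence $g(z^{t_k+1})\to g(z^*)$ by lower-semicontinuity; the robustness property \eqref{outersemi} applied to \eqref{optcond} along the subsequence then delivers $0\in\nabla f(z^*)+\partial g(z^*)$.
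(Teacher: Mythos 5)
Your proposal is correct and reaches the paper's conclusions by a genuinely different computation, although the overall strategy (descent of $\P_\gamma$ with a $\|y^{t+1}-y^t\|^2$ decrement, then standard limit passages) is the same. The paper telescopes $\P_\gamma$ through the intermediate points $(y^{t+1},z^{t+1},x^t)$ and $(y^{t+1},z^t,x^t)$, invokes the two minimizer properties separately, and then controls the cross terms $\|y^{t+1}-z^t\|^2$ and $\|y^t-z^t\|^2$ via the monotonicity of $\nabla f$ and \eqref{ineqbd}, landing on the descent constant $\frac12(2L+\gamma L^2-3\sigma)$. You instead eliminate $x$ first: along the iterates $\P_\gamma$ collapses to the forward--backward--envelope--type expression $f(y)+g(z)+\langle\nabla f(y),z-y\rangle+\frac1{2\gamma}\|z-y\|^2$ (valid for $t\ge1$), the $z$-step becomes exactly a prox-gradient step from $y^{t+1}$, and the descent estimate follows from one application each of strong convexity and the descent lemma at the test point $z^t$. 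The only place where you assert rather than prove is the sign of the final coefficient of $\|u\|^2$, $u=y^{t+1}-y^t$; this is the crux, and it does check out. With $v=\nabla f(y^{t+1})-\nabla f(y^t)$ your bound equals
\[
\frac{L-\sigma}{8}\bigl(\|u\|^2+\gamma^2\|v\|^2\bigr)-\frac{2-\gamma(\sigma+L)}{4}\langle u,v\rangle
\;\le\;\frac18\Bigl[(L-\sigma)(1+\gamma^2L^2)-2\sigma\bigl(2-\gamma(\sigma+L)\bigr)\Bigr]\|u\|^2=:C(\gamma)\|u\|^2,
\]
using $\sigma\le L$, $\|v\|\le L\|u\|$, $\langle u,v\rangle\ge\sigma\|u\|^2$ and $\gamma(\sigma+L)<2$ (the last holds since \eqref{gammacond} forces $\gamma<1/L$). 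The function $C$ is convex in $\gamma$, $C(0)=(L-5\sigma)/8<0$ when $3\sigma>2L$, and at $\gamma_0=(3\sigma-2L)/L^2$ one computes $8L^2C(\gamma_0)=(L-\sigma)(3\sigma^2-20L\sigma+5L^2)\le0$ for $\tfrac23L<\sigma\le L$, so $C(\gamma)<0$ on the whole interval \eqref{gammacond}; in fact your constant is slightly sharper than the paper's, so ``precisely when'' should read ``whenever''. The remaining limit arguments --- lower boundedness of $\P_\gamma$ along the convergent subsequence, summability of $\|y^{t+1}-y^t\|^2$, the cancellation of the quadratic terms giving $g(z^{t_j})\to g(z^*)$, and the closure property \eqref{outersemi} applied to \eqref{optcond} --- coincide with the paper's proof.
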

\begin{remark}
  We note that the condition $3\sigma > 2L$ indicates that this convergence result can only be applied when $f$ has a relatively large strong convexity modulus, i.e., when $\sigma > \frac23 L$. It seems restrictive at first glance, but we will demonstrate in the next section how this theorem can be applied in a wide range of problems that do not explicitly contain a strongly convex part in the objective. Specifically, we will show that the method can be suitably applied to minimizing a coercive function $F+G$, where $G$ is a proper closed function and $F$ is convex Lipschitz differentiable but {\em not} necessarily strongly convex; see Corollary~\ref{cor:1}.
\end{remark}
\begin{proof}
  We study the behavior of $\P_\gamma$ along the sequence generated from the PR splitting method. First, using \eqref{Ddef} and the definition of the $x$-update,
  we see that
  \begin{equation}\label{D1ineq}
    \P_\gamma(y^{t+1},z^{t+1},x^{t+1}) - \P_\gamma(y^{t+1},z^{t+1},x^t) = \frac1\gamma\langle x^{t+1}-x^t,z^{t+1} - y^{t+1}\rangle = \frac1{2\gamma}\|x^{t+1} - x^t\|^2.
  \end{equation}
  Second, making use of the first relation in \eqref{Drelation} and the definition of $z^{t+1}$ as a minimizer, we have
  \begin{equation}\label{D2ineq}
    \begin{split}
      &\P_\gamma(y^{t+1},z^{t+1},x^t) - \P_\gamma(y^{t+1},z^t,x^t) \\
      & = g(z^{t+1}) + \frac1{2\gamma}\|2y^{t+1} - z^{t+1} - x^t\|^2 - \frac2\gamma\|y^{t+1} - z^{t+1}\|^2\\
      &\ \ \  - g(z^t) - \frac1{2\gamma}\|2y^{t+1} - z^t - x^t\|^2  + \frac2\gamma\|y^{t+1} - z^t\|^2\\
      & \le \frac2\gamma\left( \|y^{t+1} - z^t\|^2 - \|y^{t+1} - z^{t+1}\|^2\right)= \frac2\gamma\left( \|y^{t+1} - z^t\|^2 - \frac14\|x^{t+1} - x^t\|^2\right),
    \end{split}
  \end{equation}
  where the last relation is due to the definition of $x^{t+1}$. Consequently, summing \eqref{D1ineq} and \eqref{D2ineq}, we have
  \begin{equation}\label{D12ineq}
    \P_\gamma(y^{t+1},z^{t+1},x^{t+1}) - \P_\gamma(y^{t+1},z^t,x^t) \le \frac2\gamma \|y^{t+1} - z^t\|^2.
  \end{equation}
  Next, making use of the second relation in \eqref{Drelation}, we see that
  \begin{equation}\label{D3ineq}
    \begin{split}
      &\P_\gamma(y^{t+1},z^t,x^t) - \P_\gamma(y^t,z^t,x^t)\\
      & = f(y^{t+1}) + \frac1{2\gamma} \|x^t - y^{t+1}\|^2 - f(y^t) - \frac1{2\gamma} \|x^t - y^t\|^2 - \frac1{\gamma}\|y^{t+1} - z^t\|^2 + \frac1{\gamma}\|y^t - z^t\|^2\\
      & \le -\frac{1}{2}\left(\frac1\gamma + \sigma\right)\|y^{t+1} - y^t\|^2 - \frac1{\gamma}\|y^{t+1} - z^t\|^2 + \frac1{\gamma}\|y^t - z^t\|^2,
    \end{split}
  \end{equation}
  where, in the last inequality, we used the definition of $y^{t+1}$ as a minimizer and the strong convexity of the objective in the minimization problem that defines the $y$-update.
  Combining \eqref{D3ineq} with \eqref{D12ineq} gives further that
  \begin{equation}\label{D3ineq2}
    \P_\gamma(y^{t+1},z^{t+1},x^{t+1}) - \P_\gamma(y^t,z^t,x^t)\le  -\frac{1}{2}\left(\frac1\gamma +\sigma\right)\|y^{t+1} - y^t\|^2 + \frac1{\gamma}\|y^{t+1} - z^t\|^2 + \frac1{\gamma}\|y^t - z^t\|^2.
  \end{equation}

  To further upper estimate \eqref{D3ineq2}, observe from the first relation in \eqref{fyupdate} that
  \[
  \nabla f(y^{t+1}) =  \frac{1}{\gamma}(x^t - y^{t+1}).
  \]
  Since $f$ is strongly convex with modulus $\sigma > 0$ by assumption, we see that for all $t\ge 1$,
  \begin{equation*}
    \begin{split}
      &\left\langle \frac{1}{\gamma}(x^t - y^{t+1}) - \frac{1}{\gamma}(x^{t-1} - y^t),y^{t+1} - y^t\right\rangle \ge \sigma \|y^{t+1} - y^t\|^2\\
      &\Longrightarrow
      \langle x^{t} -x^{t-1},y^{t+1} - y^t\rangle \ge (1 +\gamma \sigma)\|y^{t+1} - y^t\|^2.
    \end{split}
  \end{equation*}
  Thus, making use of the definition of $x^t$ and the above relation, we obtain further that
  \begin{equation}\label{bdy}
    \begin{split}
      &\|y^{t+1} - z^t\|^2 = \|y^{t+1} - y^t + y^t - z^t\|^2 = \left\|y^{t+1} - y^t - \frac12(x^t - x^{t-1})\right\|^2\\
      & = \|y^{t+1} - y^t\|^2 - \langle y^{t+1} - y^t,x^t - x^{t-1}\rangle + \frac14\|x^t - x^{t-1}\|^2\\
      & \le -\gamma \sigma\|y^{t+1} - y^t\|^2 + \frac14\|x^t - x^{t-1}\|^2.
    \end{split}
  \end{equation}
  In addition, observe also from the definition of the $x$-update, the first relation in \eqref{fyupdate} and the Lipschitz continuity of $\nabla f$ that for $t\ge 1$
  \begin{equation}\label{ineqbd}
  2\|y^t - z^t\| = \|x^t - x^{t-1}\| \le (1 + \gamma L)\|y^{t+1} - y^t\|.
  \end{equation}
  Combining \eqref{bdy}, \eqref{ineqbd} with \eqref{D3ineq2}, we conclude that for any $t\ge 1$
  \begin{equation}\label{useful2}
  \begin{split}
    \P_\gamma(y^{t+1},z^{t+1},x^{t+1}) - \P_\gamma(y^t,z^t,x^t) &\le \frac{1}{2\gamma}\left((1 + \gamma L)^2 - 3\gamma \sigma - 1\right)\|y^{t+1} - y^t\|^2\\
    & = \frac{1}{2}\left(-3\sigma + 2L + \gamma L^2\right)\|y^{t+1} - y^t\|^2.
  \end{split}
  \end{equation}
  By our choice of $\gamma$, $-3\sigma + 2L + \gamma L^2 < 0$. From this we see immediately that $\{\P_\gamma(y^t,z^t,x^t)\}$ is nonincreasing.
  Summing \eqref{useful2} from $t=1$ to $N-1\ge 1$, we obtain that
  \begin{equation}\label{keybd}
    \P_\gamma(y^N,z^N,x^N) - \P_\gamma(y^1,z^1,x^1) \le \frac{1}{2}\left(-3\sigma + 2L + \gamma L^2\right)\sum_{t=1}^{N-1}\|y^{t+1} - y^t\|^2.
  \end{equation}
  Using this, the closedness of $\P_\gamma$ and the existence of cluster points,
  we conclude immediately from \eqref{keybd} that $\lim\limits_{t\rightarrow \infty}\|y^{t+1} - y^t\|=0$. Combining this with \eqref{ineqbd}, we conclude that \eqref{cond1} holds.
  Furthermore, combining these with the third relation in \eqref{scheme}, we obtain further that $\lim\limits_{t\rightarrow \infty}\|z^{t+1} - z^t\|=0$.

  Consequently, if $(y^*,z^*,x^*)$ is a cluster point of $\{(y^t,z^t,x^t)\}$ with a convergent subsequence $\{(y^{t_j},z^{t_j},x^{t_j})\}$ such that
  $\lim\limits_{j\rightarrow \infty}(y^{t_j},z^{t_j},x^{t_j}) = (y^*,z^*,x^*)$, then we must have
  \begin{equation}\label{limit}
  \lim_{j\rightarrow \infty}(y^{t_j},z^{t_j},x^{t_j}) =
  \lim_{j\rightarrow \infty}(y^{t_j-1},z^{t_j-1},x^{t_j-1}) = (y^*,z^*,x^*).
  \end{equation}
  Since $z^t$ is a minimizer of the subproblem,
  \begin{equation*}
  g(z^t) + \frac1{2\gamma}\|2y^t - z^t - x^{t-1}\|^2 \le g(z^*) + \frac1{2\gamma}\|2y^t - z^* - x^{t-1}\|^2.
  \end{equation*}
  Taking limit along the convergent subsequence
  and using \eqref{limit} yields
  \begin{equation*}
  \limsup_{j\rightarrow \infty}g(z^{t_j})\le g(z^*).
  \end{equation*}
  Conversely, we have $\liminf\limits_{j\rightarrow \infty}g(z^{t_j})\ge g(z^*)$ by the lower semicontinuity of $g$.
  Thus,
  \begin{equation}\label{cond2}
    \lim_{j\rightarrow \infty}g(z^{t_j})= g(z^*).
  \end{equation}
  Using \eqref{outersemi}, \eqref{cond1}, \eqref{cond2} and passing to the limit in \eqref{optcond} along the convergent subsequence above, we conclude that the cluster point gives a stationary point of \eqref{P1}, i.e., $y^* = z^*$ and
\[
0\in \nabla f(z^*) + \partial g(z^*).
\]
This completes the proof.
\end{proof}

In the next theorem, we study sufficient conditions to guarantee boundedness of the sequence generated from the PR splitting method. Thus, a cluster point will necessarily exist under these conditions. 

\begin{theorem}[{{\bf Boundedness of sequence}}]\label{thm2}
  Suppose that $3\sigma > 2L$ and the $\gamma$ is chosen to satisfy \eqref{gammacond}. Suppose in addition that $f + g$ is coercive, i.e., $\liminf_{\|u\|\to \infty} (f + g)(u) = \infty$.
  Then the sequence $\{(y^t,z^t,x^t)\}$ generated from \eqref{scheme} is bounded.
\end{theorem}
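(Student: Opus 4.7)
The plan is to show that the merit function $\P_\gamma$, which Theorem~\ref{thm1} already tells us is nonincreasing along the iterates, admits a lower bound of the form $f(z^t)+g(z^t)$ plus a nonnegative quadratic term, and then invoke coercivity of $f+g$ to bound $\{z^t\}$, and finally extract boundedness of $\{y^t\}$ and $\{x^t\}$ from the optimality conditions.

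First, I would reformulate $\P_\gamma(y^t,z^t,x^t)$ using the $y$-update. The first line of \eqref{fyupdate} gives $x^{t-1}=y^t+\gamma\nabla f(y^t)$, and the $x$-update in \eqref{scheme} then yields $x^t = -y^t + 2z^t + \gamma\nabla f(y^t)$, so that $x^t - y^t = 2(z^t-y^t)+\gamma\nabla f(y^t)$ and $x^t-z^t = (z^t-y^t)+\gamma\nabla f(y^t)$. Plugging these into the second line of \eqref{Drelation}, expanding the squares and simplifying, a direct computation produces the identity
\[
\P_\gamma(y^t,z^t,x^t) = f(y^t) + g(z^t) + \frac{1}{2\gamma}\|z^t-y^t\|^2 + \langle\nabla f(y^t),z^t-y^t\rangle.
\]
Applying the descent lemma for the $L$-Lipschitz gradient $\nabla f$ to the first and last terms gives $f(y^t)+\langle\nabla f(y^t),z^t-y^t\rangle \ge f(z^t) - \tfrac{L}{2}\|z^t-y^t\|^2$, so
\[
\P_\gamma(y^t,z^t,x^t) \;\ge\; f(z^t) + g(z^t) + \frac{1}{2}\Bigl(\frac{1}{\gamma}-L\Bigr)\|z^t-y^t\|^2.
\]

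Next I would verify the sign condition $\tfrac{1}{\gamma}>L$. Because $f$ is both $\sigma$-strongly convex and has $L$-Lipschitz gradient, a standard fact gives $\sigma\le L$; combining this with \eqref{gammacond} yields $\gamma<\tfrac{3\sigma-2L}{L^2}\le\tfrac{1}{L}$, so the quadratic coefficient above is strictly positive. Since Theorem~\ref{thm1} established that $\{\P_\gamma(y^t,z^t,x^t)\}$ is nonincreasing, we have $\P_\gamma(y^t,z^t,x^t)\le \P_\gamma(y^1,z^1,x^1) =:M$ for all $t\ge 1$. Together with the lower bound above and the fact that $f+g$ is bounded below by coercivity, this forces both $\{f(z^t)+g(z^t)\}$ and $\{\|z^t-y^t\|\}$ to be bounded uniformly in $t$.

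Boundedness of $\{z^t\}$ then follows immediately from coercivity of $f+g$. Boundedness of $\{y^t\}$ follows because $y^t = z^t - (z^t-y^t)$ is the difference of two bounded quantities. Finally, using the relation $x^{t-1} = y^t + \gamma\nabla f(y^t)$ noted above, continuity of $\nabla f$ and the boundedness of $\{y^t\}$ imply that $\{x^{t-1}\}$, and hence $\{x^t\}$, is bounded. The main step where care is required is the reformulation of $\P_\gamma(y^t,z^t,x^t)$ into a form exposing $f(z^t)+g(z^t)$; once that identity is in hand, the rest is a straightforward chain of bounds supported by $\sigma\le L$ and the choice of $\gamma$.
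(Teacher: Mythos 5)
Your proposal is correct and follows essentially the same route as the paper's proof: both reduce to the lower bound $\P_\gamma(y^t,z^t,x^t)\ge f(z^t)+g(z^t)+\frac12\bigl(\frac1\gamma-L\bigr)\|y^t-z^t\|^2$ via the second identity in \eqref{Drelation}, the relation $\nabla f(y^t)=\frac1\gamma(x^{t-1}-y^t)$ and the descent lemma, then invoke monotonicity of $\P_\gamma$, the bound $\gamma<\frac{3\sigma-2L}{L^2}\le\frac1L$ (using $\sigma\le L$) and coercivity. Your intermediate identity for $\P_\gamma$ in terms of $\nabla f(y^t)$ is just a repackaged form of the paper's algebra, so the two arguments are the same in substance.
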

\begin{proof}
  Recall from Theorem~\ref{thm1} that the merit function is nonincreasing along the sequence generated from \eqref{scheme}. In particular,
  \begin{equation}\label{bd}
    \P_\gamma(y^t,z^t,x^t) \le \P_\gamma(y^1,z^1,x^1)
  \end{equation}
  whenever $t\ge 1$, where
  \begin{equation}\label{ineq2}
   \P_\gamma(y^t,z^t,x^t) = f(y^t) + g(z^t) - \frac1{2\gamma}\|x^t - z^t\|^2 + \frac1{2\gamma}\|x^t - y^t\|^2 - \frac1\gamma\|y^t-z^t\|^2
  \end{equation}
  from the second relation in \eqref{Drelation}. Next, recall from the definition of $x$-update that $x^t = x^{t-1} + 2(z^t - y^t)$, which together with the first relation in \eqref{fyupdate} gives
  \begin{equation}\label{rel0}
    \nabla f(y^t) = \frac1\gamma(x^{t-1}-y^t) = \frac1\gamma([x^t - z^t] - [z^t - y^t]).
  \end{equation}
  Moreover, for the function $f$ whose gradient is Lipschitz continuous with modulus $L$, we have
  \begin{equation}\label{Lipineq}
  f(z^t) \le f(y^t) + \langle \nabla f(y^t), z^t - y^t\rangle + \frac{L}{2}\|z^t - y^t\|^2.
  \end{equation}
  Combining these with \eqref{ineq2} and \eqref{bd}, we see further that
  \begin{equation}\label{bd1}
    \begin{split}
      &\P_\gamma(y^1,z^1,x^1) \ge f(y^t) + g(z^t) - \frac1{2\gamma}\|x^t - z^t\|^2 + \frac1{2\gamma}\|x^t - y^t\|^2 - \frac1\gamma\|y^t-z^t\|^2\\
      & \ge f(z^t) + g(z^t) - \langle \nabla f(y^t), z^t - y^t\rangle - \frac1{2\gamma}\|x^t - z^t\|^2 + \frac1{2\gamma}\|x^t - y^t\|^2 - \left(\frac{L}{2} + \frac1\gamma\right)\|y^t-z^t\|^2\\
      & = f(z^t) + g(z^t) - \frac1\gamma\langle x^t - z^t, z^t - y^t\rangle - \frac1{2\gamma}\|x^t - z^t\|^2 + \frac1{2\gamma}\|x^t - y^t\|^2 - \frac{L}{2}\|y^t-z^t\|^2\\
      & = f(z^t) + g(z^t) + \frac12\left(\frac1\gamma - L\right)\|y^t - z^t\|^2,
    \end{split}
  \end{equation}
  where the second inequality follows from \eqref{Lipineq}, the first equality follows from \eqref{rel0}, while the last equality follows from the elementary relation $\langle u,v\rangle = \frac12(\|u + v\|^2 - \|u\|^2 - \|v\|^2)$ applied to $u = x^t - z^t$ and $v = z^t - y^t$. From \eqref{bd1}, the coerciveness of $f+g$ and the fact that $\gamma < \frac{3\sigma - 2L}{L^2} \le \frac1L$, we conclude that $\{z^t\}$ and $\{y^t\}$ are bounded. The boundedness of $\{x^t\}$ now follows from these and the first relation in \eqref{fyupdate}. This completes the proof.
\end{proof}

\begin{remark}[{{\bf Comments on the proof of Theorem~\ref{thm2}}}]

\begin{enumerate}[{\rm (i)}]
  \item The technique of using \eqref{Lipineq} for establishing \eqref{bd1} was also used previously in \cite[Lemma~3.3]{Hong14} for
   showing that the augmented Lagrangian function is bounded below along the sequence generated from the alternating direction method of multipliers for a special class of problems.
   Here, we applied the technique to the new merit function $\P_{\gamma}$.
  \item The same technique used here can be applied to establishing the boundedness of the sequence generated by the DR splitting method studied in \cite{LiPong14_2}
  under a condition which is slightly weaker than the one used in \cite{LiPong14_2}. In fact, one can show that, the DR splitting method in \cite{LiPong14_2}
  generates a bounded sequence under the blanket assumptions of $f$ and $g$ in \cite[Section~3]{LiPong14_2}, the condition that $f + g$ is coercive and the choice of parameter specified in \cite[Theorem~4]{LiPong14_2} \footnote{This slightly improves \cite[Theorem~4]{LiPong14_2}
  because \cite[Theorem~4]{LiPong14_2} assumed a slightly stronger condition that $f$ and $g$ are bounded below and one of them is coercive.}.

      To see this, recall that for the DR splitting method, we also have $\nabla f(y^t) = \frac1\gamma(x^{t-1} - y^t)$ but have $x^t = x^{t-1} + (z^{t} - y^t)$ instead of the third relation in \eqref{scheme}. Thus, $\nabla f(y^t) = \frac1\gamma(x^t - z^t)$ and we have the following estimate for the DR merit function, making use of \eqref{Lipineq}:
      \begin{equation*}
        \begin{split}
          &\D_\gamma(y^t,z^t,x^t)  = f(y^t) + g(z^t) - \frac1{2\gamma}\|x^t - z^t\|^2 + \frac1{2\gamma}\|x^t - y^t\|^2\\
          & \ge f(z^t) + g(z^t) - \langle \nabla f(y^t), z^t - y^t\rangle - \frac{L}{2}\|z^t - y^t\|^2 - \frac1{2\gamma}\|x^t - z^t\|^2 + \frac1{2\gamma}\|x^t - y^t\|^2\\
          & = f(z^t) + g(z^t) - \frac1\gamma\langle x^t - z^t, z^t - y^t\rangle - \frac{L}{2}\|z^t - y^t\|^2 - \frac1{2\gamma}\|x^t - z^t\|^2 + \frac1{2\gamma}\|x^t - y^t\|^2\\
          & = f(z^t) + g(z^t) + \frac12\left(\frac1\gamma - L\right)\|y^t - z^t\|^2,
        \end{split}
      \end{equation*}
      where the last equality follows from the elementary relation $\langle u,v\rangle = \frac12(\|u + v\|^2 - \|u\|^2 - \|v\|^2)$ applied to $u = x^t - z^t$ and $v = z^t - y^t$. The boundedness of the sequence can then be deduced under the choice of $\gamma$ in \cite[Theorem~4]{LiPong14_2}, which guarantees $\gamma < \frac1L$, and the assumption that $f + g$ is coercive.
\end{enumerate}
\end{remark}

As in \cite[Theorem~4]{LiPong14} and \cite[Theorem~2]{LiPong14_2}, one can also show that the whole sequence generated is convergent under the additional assumption that $\P_\gamma(y,z,x)$ is a KL function.\footnote{We refer the readers to, for example, \cite{AtBoReSo10,AtBoSv13,BolDanLew07,BolDanLewShi07}, for the definition and examples of KL functions. In particular, if $f$ and $g$ are proper closed semi-algebraic functions, then $\P_\gamma$ is a KL function for any $\gamma > 0$.} To this end, note that for any $t\ge 1$, we have from \eqref{Ddef} and the third relation in \eqref{scheme} that
\begin{equation}\label{nablaxP}
  \nabla_x\P_\gamma(y^t,z^t,x^t) = \frac1\gamma (z^t - y^t) = \frac1{2\gamma}(x^t - x^{t-1}).
\end{equation}
Moreover, using the second relation in \eqref{Drelation}, one can obtain
\begin{equation}\label{nablayP}
\nabla_y\P_\gamma(y^t,z^t,x^t) = \nabla f(y^t) + \frac1\gamma(y^t - x^t) - \frac2\gamma(y^t - z^t)
  = \frac1\gamma(x^{t-1} - x^t) - \frac2\gamma(y^t - z^t) = 0
\end{equation}
where the second equality follows from the first relation in \eqref{fyupdate}, and the last equality follows again from  the third relation in \eqref{scheme}. Finally,
using the second relation in \eqref{Drelation}, one can compute that
\begin{equation}\label{nablazP}
  \begin{split}
    &\partial_z \P_\gamma(y^t,z^t,x^t) = \partial g(z^t) - \frac1\gamma(z^t - x^t) - \frac2\gamma(z^t - y^t)\\
    & = \partial g(z^t) + \frac1\gamma(z^t - y^t) - \frac1\gamma(y^t - x^{t-1}) - \frac1\gamma(z^t - y^t) + \frac1\gamma(y^t - x^{t-1}) - \frac1\gamma(z^t - x^t) - \frac2\gamma(z^t - y^t)\\
    &\ni -\frac4\gamma(z^t - y^t) + \frac1\gamma(x^t - x^{t-1}) = -\frac1\gamma(x^t - x^{t-1}),
  \end{split}
\end{equation}
where the inclusion follows from the second relation in \eqref{fyupdate} and the last equality follows from the third relation in \eqref{scheme}. Consequently, by combining \eqref{nablaxP}, \eqref{nablayP}, \eqref{nablazP} and \eqref{ineqbd}, we see the existence of $\kappa > 0$ so that
\begin{equation*}
  {\rm dist}\,(0,\partial \P_\gamma(y^t,z^t,x^t))\le \kappa \|y^{t+1} - y^t\|.
\end{equation*}
Using this, \eqref{useful2} and following the arguments as in the proof of \cite[Theorem~2]{LiPong14_2}, it is not hard to prove the following result. We omit the detailed proof here.

\begin{theorem}[{{\bf Global convergence of the whole sequence}}]\label{thm3}
  Suppose that $3\sigma > 2L$, the parameter $\gamma > 0$ is chosen as in \eqref{gammacond}
  and that the sequence $\{(y^t,z^t,x^t)\}$ generated from \eqref{scheme} has a cluster point $(y^*,z^*,x^*)$. Suppose also that $\P_\gamma$ is a KL function.
  Then the whole sequence $\{(y^t,z^t,x^t)\}$ is convergent.
\end{theorem}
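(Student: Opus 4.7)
The plan is to apply the standard Kurdyka-Łojasiewicz convergence framework (à la Attouch-Bolte-Svaiter), following the template already used in \cite[Theorem~2]{LiPong14_2}. Three ingredients are required: a sufficient decrease of the merit function along the iterates, a relative error bound on the subdifferential of the merit function, and continuity of the merit function along the sequence. The first two are already essentially established in the text preceding the theorem, and the third follows directly from Theorem~\ref{thm1}.

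Concretely, I would first assemble the three ingredients. The sufficient decrease estimate \eqref{useful2} yields a constant $c>0$ (depending on $\sigma$, $L$, $\gamma$) with
\begin{equation*}
\P_\gamma(y^t,z^t,x^t) - \P_\gamma(y^{t+1},z^{t+1},x^{t+1}) \ge c\,\|y^{t+1} - y^t\|^2.
\end{equation*}
The relative error bound, obtained by combining \eqref{nablaxP}--\eqref{nablazP} with \eqref{ineqbd}, takes the form
\begin{equation*}
\dist\!\left(0,\partial \P_\gamma(y^t,z^t,x^t)\right) \le \kappa\,\|y^{t+1} - y^t\|
\end{equation*}
for some $\kappa>0$. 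For the continuity ingredient, I would check that $\P_\gamma(y^t,z^t,x^t)\to \P_\gamma(y^*,z^*,x^*)$ along the full sequence: Theorem~\ref{thm1} already gives monotonicity of the real sequence $\{\P_\gamma(y^t,z^t,x^t)\}$, and \eqref{cond2} combined with the continuity of $f$ and of the quadratic terms in \eqref{Drelation} forces the subsequential limit to coincide with $\P_\gamma(y^*,z^*,x^*)$, which by monotonicity upgrades to convergence of the entire numerical sequence.

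Next, I would invoke the KL property of $\P_\gamma$ at $(y^*,z^*,x^*)$ and run the standard concavity/telescoping argument: the KL inequality, together with the two estimates above, yields after summation the square-summability upgrade
\begin{equation*}
\sum_{t=1}^\infty \|y^{t+1}-y^t\| < \infty.
\end{equation*}
Hence $\{y^t\}$ is Cauchy and converges to $y^*$. Convergence of $\{x^t\}$ then follows from the identity $x^{t-1} = y^t + \gamma\,\nabla f(y^t)$ (the first relation of \eqref{fyupdate}) together with continuity of $\nabla f$, and convergence of $\{z^t\}$ to $z^* = y^*$ is then immediate from $\|z^t - y^t\|\to 0$ in \eqref{cond1}.

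I do not expect serious obstacles. The only slightly delicate point, already handled implicitly in the text, is establishing continuity of $\P_\gamma$ along the sequence in the presence of a merely lower semicontinuous $g$; this is taken care of by the upper-estimate argument leading to \eqref{cond2}. Apart from that, the argument is a verbatim replay of the abstract KL recipe of \cite[Theorem~2]{LiPong14_2}, which is why the authors legitimately omit the explicit calculation.
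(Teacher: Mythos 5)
Your proposal is correct and follows essentially the same route the paper intends: the authors explicitly assemble the sufficient decrease estimate \eqref{useful2} and the relative error bound ${\rm dist}(0,\partial \P_\gamma(y^t,z^t,x^t))\le \kappa\|y^{t+1}-y^t\|$ from \eqref{nablaxP}--\eqref{nablazP} and \eqref{ineqbd}, and then defer to the KL argument of \cite[Theorem~2]{LiPong14_2}, which is exactly the recipe you replay. Your handling of the continuity ingredient via \eqref{cond2} and the recovery of $\{x^t\}$ and $\{z^t\}$ from the convergence of $\{y^t\}$ are both consistent with that template.
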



As we have seen from Theorems~\ref{thm1} and \ref{thm2}, our convergence analysis of the PR splitting method requires that the nonconvex objective function can be decomposed as $f+g$ where $f$ is strongly convex. It should be noted that if the strong convexity assumption on $f$ is dropped, then the sequence generated is not necessarily converging to/clustering at a stationary point even when $g$ is also convex. On the other hand, in the next section, we will demonstrate how the method can be suitably applied to minimizing a coercive function $F+G$, where $G$ is a proper closed function and $F$ is convex Lipschitz differentiable but {\em not} necessarily strongly convex.

\section{Peaceman-Rachford splitting methods for nonconvex problems with non-strongly convex decomposition}\label{sec4}
  In many applications, the underlying optimization problem can be formulated as
  \begin{equation}\label{appliedP}
    \begin{array}{rl}
      \min\limits_u & F(u) + G(u)
    \end{array}
  \end{equation}
  where $F + G$ is {\em coercive}, $F$ is a convex smooth function with a Lipschitz continuous gradient whose modulus is at most $L_F > 0$, and $G$ is a proper and closed function with a nonempty proximal mapping ${\rm prox}_{\tau G}(z)$ for any $z$ and any $\tau > 0$. For example, when $F$ is the least squares loss function for linear regression and $G$ is the indicator function of the $\ell_1$ norm ball, the problem \eqref{appliedP} reduces to the LASSO \cite{Tib96}. This and various related (possibly nonconvex) models have been studied extensively in the statistical literature; see, for example, \cite{AtBoSv13,BvdBSC13,CaTa05,KnF00,FanJ}.
  We will also provide more concrete examples and simulation results later in Section \ref{sec5}.

  In view of the structure of \eqref{appliedP}, a natural way of applying a splitting method would be to set $f(y) = F(y)$ and $g(z) = G(z)$.
  However, since this choice of $f$ is not strongly convex, our convergence theory in Section~\ref{sec3} cannot be applied to deducing convergence of the resulting PR splitting method.

  Thus, we consider an alternative way of splitting the objective in order to obtain a strongly convex $f$. To this end, we start by fixing any $\alpha > 0$ and defining $f(y) = F(y) + \frac\alpha2\|y\|^2$, $g(z) = G(z) - \frac\alpha2\|z\|^2$. Then $\nabla f$ is Lipschitz continuous with a modulus at most $L = L_F + \alpha$, and $f$ is strongly convex with modulus at least $\sigma = \alpha$. Thus, one only needs to pick $\alpha > 2L_F$ so that $3\sigma > 2L$. Let $\alpha = \beta L_F$ for some $\beta > 2$. Then the upper bound of $\gamma$ in \eqref{gammacond} is given by
  \begin{equation*}
    \frac{\alpha  -2L_F}{(L_F + \alpha)^2} = \frac{\beta -2}{(\beta + 1)^2L_F}.
  \end{equation*}
   Consequently, if we set
  \[
  f(y) = F(y) + \frac{\beta L_F}2\|y\|^2 \ {\rm and}\ g(z) = G(z) - \frac{\beta L_F}2\|z\|^2,
  \]
  then we can pick $0 < \gamma < \frac{\beta -2}{(\beta + 1)^2L_F}$.\footnote{One natural choice of $\beta$ is to set $\beta = 5$ so that $\max_{\beta > 2}\frac{\beta -2}{(\beta + 1)^2L_F} = \frac1{12L_F}$ is attained. However, we discover in our numerical experiments that a smaller $\beta > 2$ coupled with a suitable heuristic for updating $\gamma$ leads to faster convergence.} Moreover, for this choice of $\gamma$, the Assumption~\ref{assumg} is satisfied for the above choice of $g$. Hence, it follows from Theorem~\ref{thm2} that the sequence generated by applying the PR splitting method to this pair of $f$ and $g$ is bounded, and then any cluster point gives a stationary point of \eqref{appliedP}, according to Theorem~\ref{thm1}.
  For concreteness and easy reference for our subsequent discussion, we present this algorithm explicitly below:\\

  \fbox{\parbox{5.7 in}{
\begin{description}
\item {\bf PR splitting method for \eqref{appliedP}}

\item[Step 0.] Input $x^0$, $\beta > 2$ and $\gamma \in \left(0,\frac{\beta -2}{(\beta + 1)^2L_F}\right)$.

\item[Step 1.] Set
\begin{equation}\label{scheme0}
\left\{
\begin{split}
&y^{t+1}=\argmin_{y} \left\{F(y) + \frac{\beta L_F}{2}\|y\|^2 + \frac{1}{2\gamma}\|y - x^t\|^2\right\}, \\
&z^{t+1}\in\Argmin_{z} \left\{G(z) - \frac{\beta L_F}{2}\|z\|^2 + \frac{1}{2\gamma}\|2y^{t+1} - x^t - z\|^2\right\},\\
&x^{t+1}=x^t+2(z^{t+1}- y^{t+1}).
\end{split}
\right.
\end{equation}

\item[Step 2.] If a termination criterion is not met, go to Step 1.
\end{description}
}}
\vspace{.1 in}

  To the best of our knowledge, the global convergence of the sequence generated from \eqref{scheme0} is new, which we summarize below for concreteness.

  \begin{corollary}\label{cor:1}
    Consider optimization problem \eqref{appliedP} and let $\{(y^t,z^t,x^t)\}$ be the sequence generated from \eqref{scheme0}. Then the sequence is bounded, and any cluster point $(\bar y,\bar z,\bar x)$ would satisfy $\bar y = \bar z$, and $\bar z$ is a stationary point of \eqref{appliedP}, that is,
    \[
      0\in \nabla F(\bar z) + \partial G(\bar z).
\]
  \end{corollary}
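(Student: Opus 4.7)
The plan is to view Corollary~\ref{cor:1} as a direct consequence of Theorems~\ref{thm1} and \ref{thm2} applied to the specific splitting $f(y):=F(y)+\tfrac{5L_F}{2}\|y\|^2$ and $g(z):=G(z)-\tfrac{5L_F}{2}\|z\|^2$, so the bulk of the work is verifying that this pair $(f,g)$ satisfies all the hypotheses of those two theorems for every $\gamma\in(0,\tfrac1{12L_F})$, and then translating the stationarity condition obtained for $(f,g)$ back into one for $(F,G)$.

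First I would check the blanket assumptions on $f$ and $g$. Since $F$ is convex with $L_F$-Lipschitz gradient, $f$ is strongly convex with modulus $\sigma=5L_F$ and $\nabla f$ is Lipschitz with modulus at most $L=L_F+5L_F=6L_F$; in particular $3\sigma=15L_F>12L_F=2L$, and the threshold in \eqref{gammacond} computes to $(3\sigma-2L)/L^2=3L_F/(36L_F^2)=1/(12L_F)$, so the condition \eqref{gammacond} is satisfied by any $\gamma\in(0,\tfrac1{12L_F})$. For Assumption~\ref{assumg} on $g$, I would observe that
\[
{\rm prox}_{\gamma g}(z)=\Argmin_u\left\{\gamma G(u)+\frac{1-5\gamma L_F}{2}\|u\|^2-\langle u,z\rangle+\frac{1}{2}\|z\|^2\right\},
\]
and since $\gamma<\tfrac1{12L_F}$ gives $1-5\gamma L_F>0$, the objective is level-bounded; because $G$ is proper and closed, the Weierstrass theorem yields a nonempty minimizer set, hence ${\rm prox}_{\gamma g}(z)\neq\emptyset$. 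This verifies Assumption~\ref{assumg}, and since $f+g=F+G$, the coerciveness hypothesis in Theorem~\ref{thm2} is exactly the assumed coerciveness of $F+G$.

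With all assumptions verified, Theorem~\ref{thm2} gives the boundedness of $\{(y^t,z^t,x^t)\}$, and Theorem~\ref{thm1} ensures that every cluster point $(\bar y,\bar z,\bar x)$ satisfies $\bar y=\bar z$ together with
\[
0\in\nabla f(\bar z)+\partial g(\bar z).
\]
To finish, I would translate this back to $(F,G)$: since $-\tfrac{5L_F}{2}\|\cdot\|^2$ is continuously differentiable, the limiting subdifferential sum rule \cite[Exercise~8.8]{Rock98} gives $\partial g(\bar z)=\partial G(\bar z)-5L_F\bar z$, while $\nabla f(\bar z)=\nabla F(\bar z)+5L_F\bar z$. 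Adding these cancels the linear terms, leaving $0\in\nabla F(\bar z)+\partial G(\bar z)$, which is the claimed stationarity of $\bar z$ for \eqref{appliedP}.

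The only step that is not entirely routine is the verification of Assumption~\ref{assumg} for the non-convex $g$: one must ensure that subtracting $\tfrac{5L_F}{2}\|\cdot\|^2$ from $G$ does not destroy existence of proximal points, and this is precisely where the explicit upper bound $\gamma<\tfrac1{12L_F}$ is used to keep the quadratic in the proximal subproblem strictly convex. Everything else is a direct invocation of Theorems~\ref{thm1} and \ref{thm2} combined with an elementary subdifferential calculation.
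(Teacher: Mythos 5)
Your proposal is correct and follows essentially the same route as the paper: identify \eqref{scheme0} as \eqref{scheme} applied to $f=F+\tfrac{5L_F}{2}\|\cdot\|^2$ and $g=G-\tfrac{5L_F}{2}\|\cdot\|^2$, check that $\sigma=5L_F$, $L=6L_F$ make \eqref{gammacond} read $\gamma<\tfrac{1}{12L_F}$, and then invoke Theorems~\ref{thm1} and \ref{thm2} together with the cancellation of the quadratic shifts in the stationarity condition. One small inaccuracy: in verifying Assumption~\ref{assumg} you claim the proximal objective is level-bounded ``because $G$ is proper and closed,'' but properness and closedness alone do not give level-boundedness (take $G(u)=-\|u\|^4$); the clean fix is to complete the square, writing $\gamma g(u)+\tfrac12\|u-z\|^2=\beta\bigl(\tfrac{\gamma}{\beta}G(u)+\tfrac12\|u-z/\beta\|^2\bigr)+c$ with $\beta=1-5\gamma L_F>0$, so that nonemptiness of ${\rm prox}_{\gamma g}(z)$ follows from the standing assumption in \eqref{appliedP} that ${\rm prox}_{\tau G}$ is nonempty for every $\tau>0$. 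With that repair the argument matches the paper's.
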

  \begin{proof}
    We first note that since \eqref{scheme0} is just \eqref{scheme} applied to $f(y) = F(y) + \frac{\beta L_F}{2}\|y\|^2$ and $g(z) = G(z) - \frac{\beta L_F}{2}\|z\|^2$, we obtain immediately from the above discussion and Theorem~\ref{thm1} that $\bar y = \bar z$ and $\bar z$ is a stationary point of \eqref{appliedP} for any cluster point $(\bar y,\bar z,\bar x)$. In addition, the objective function $f + g = F + G$ is coercive by assumption. The boundedness of the sequence $\{(y^t,z^t,x^t)\}$ now follows from Theorem~\ref{thm2}. This completes the proof.
\end{proof}

\subsection{Peaceman-Rachford splitting method for convex problems}

In this subsection, we suppose in addition that the $G$ in \eqref{appliedP} is also convex. Hence, \eqref{appliedP} is a convex problem. We first establish the following global (ergodic) complexity result for the sequence generated from \eqref{scheme0}. Similar kinds of complexity results have also been established for other primal-dual methods for convex optimization problems; see, for example, \cite[Theorem~2]{WangBan13}.
We would like to emphasize that the PR splitting method we discuss here is different from the classical PR splitting method in the literature: we split the convex objective $F+G$ into the sum of a strongly convex function $f$ and a possibly {\em nonconvex} function $g$, while the classical PR splitting method only admits splitting into a sum of convex functions.

\begin{theorem}[{{\bf Global iteration complexity under convexity}}]
Consider optimization problem \eqref{appliedP} with $G$ being convex. Let $\{(y^t,z^t,x^t)\}$ be the sequence generated from \eqref{scheme0} and $(\bar y,\bar z,\bar x)$ be any cluster point of this sequence. Then, $\bar y = \bar z$ and $\bar z$ is a solution of \eqref{appliedP}. Moreover, for any $N \ge 1$, we have
    \begin{equation}\label{complexity}
    F(\bar z^N) + G(\bar z^N) - F(\bar z) - G(\bar z) \le \frac1{8\beta\gamma NL_F}\left(\frac{1}{\gamma} - \beta L_F\right)\|x^0 - \bar x\|^2,
    \end{equation}
    where $\bar z^N := \frac{1}{N}\sum_{t=1}^N z^t$ and
    \[
     \min_{0 \le t \le N}\{\|x^{t+1} -x^{t}\|\}=o(\frac{1}{\sqrt{N}}).
    \]
\end{theorem}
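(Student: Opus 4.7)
The first assertion---that $\bar{y}=\bar{z}$ and $\bar{z}$ solves \eqref{appliedP}---is immediate from Corollary~\ref{cor:1}, because when $F+G$ is convex the stationarity condition $0\in\nabla F(\bar{z})+\partial G(\bar{z})$ derived there is already sufficient for global optimality.

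For the ergodic bound \eqref{complexity}, the plan is a primal--dual telescoping argument with Lyapunov function $\|x^t-\bar{x}\|^2$, tailored to the strongly-convex-plus-possibly-nonconvex splitting used in \eqref{scheme0}. Set $\nu:=\tfrac{1}{\gamma}-5L_F$, which is positive by the bound on $\gamma$. Combining the two optimality conditions in \eqref{fyupdate} for the choice $f=F+\tfrac{5L_F}{2}\|\cdot\|^2$, $g=G-\tfrac{5L_F}{2}\|\cdot\|^2$ with the relation $y^{t+1}-z^{t+1}=\tfrac{1}{2}(x^t-x^{t+1})$ yields the sharp subgradient identity
\[
\nabla F(y^{t+1})+\xi^{t+1} \;=\; \tfrac{\nu}{2}(x^t-x^{t+1}), \qquad \xi^{t+1}\in\partial G(z^{t+1}).
\]
Applying convexity of $G$, convexity of $F$ at $y^{t+1}$, and $L_F$-smoothness of $F$ (to pass from $y^{t+1}$ to $z^{t+1}$), and then substituting this identity, gives the per-step gap estimate
\[
F(z^{t+1})+G(z^{t+1})-F(\bar z)-G(\bar z) \;\le\; \tfrac{\nu}{2}\langle x^t-x^{t+1},\,z^{t+1}-\bar z\rangle + \tfrac{L_F}{2}\|z^{t+1}-y^{t+1}\|^2.
\]

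Next I would convert the cross term into a telescoping decrement of $\|x^t-\bar{x}\|^2$. Decomposing $z^{t+1}-\bar z=(y^{t+1}-\bar y)+(z^{t+1}-y^{t+1})$, using $z^{t+1}-y^{t+1}=\tfrac{1}{2}(x^{t+1}-x^t)$, and invoking the proximal relations $x^t=y^{t+1}+\gamma\nabla f(y^{t+1})$, $\bar{x}=\bar{y}+\gamma\nabla f(\bar{y})$, the cross term should rewrite as $\tfrac{\nu}{4}(\|x^t-\bar{x}\|^2-\|x^{t+1}-\bar{x}\|^2)$ minus a residual of the form $\gamma\langle\nabla F(y^{t+1})+\xi^{t+1},\,\nabla f(y^{t+1})-\nabla f(\bar{y})\rangle$. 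Co-coercivity of $\nabla F$ and monotonicity of $\partial G$, together with a carefully chosen Young's inequality, allow one to dominate this residual (together with the $\tfrac{L_F}{2}\|z^{t+1}-y^{t+1}\|^2$ correction) by a non-positive multiple of $\|z^{t+1}-y^{t+1}\|^2$. What survives is a clean estimate of the form $F(z^{t+1})+G(z^{t+1})-F(\bar z)-G(\bar z)\le\tfrac{\nu}{40\gamma L_F}(\|x^t-\bar{x}\|^2-\|x^{t+1}-\bar{x}\|^2)$. Summing from $t=0$ to $N-1$ and applying Jensen's inequality to the convex function $F+G$ at $\bar{z}^N=\tfrac{1}{N}\sum_{t=1}^N z^t$ then produces \eqref{complexity}.

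For the $o(1/\sqrt{N})$ claim on $\min_{0\le t\le N}\|x^{t+1}-x^t\|$, the plan is first to establish $\sum_{t\ge 0}\|x^{t+1}-x^t\|^2<\infty$. Specializing \eqref{useful2} to $\sigma=5L_F$ and $L=6L_F$ gives a per-step $\P_\gamma$-decrease of at least $\tfrac{3L_F}{2}(1-12\gamma L_F)\|y^{t+1}-y^t\|^2$, which by \eqref{ineqbd} dominates a positive multiple of $\|x^t-x^{t-1}\|^2$. The lower bound $\P_\gamma(y^t,z^t,x^t)\ge F(z^t)+G(z^t)\ge F(\bar z)+G(\bar z)$---from the calculation in the proof of Theorem~\ref{thm2} (valid since $\gamma<1/L$) combined with optimality of $\bar z$---keeps the nonincreasing sequence $\{\P_\gamma(y^t,z^t,x^t)\}$ bounded below, so telescoping yields the square-summability. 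A standard tail-summability argument then delivers $\min_{\lceil N/2\rceil\le t\le N}\|x^{t+1}-x^t\|^2\le\tfrac{2}{N}\sum_{t\ge\lceil N/2\rceil}\|x^{t+1}-x^t\|^2=o(1/N)$, hence $\min_{0\le t\le N}\|x^{t+1}-x^t\|=o(1/\sqrt{N})$. The hardest piece will be the residual handling in the telescoping step above: choosing the Young's parameter precisely so that all cross-terms from the expansion of $\gamma(\nabla f(y^{t+1})-\nabla f(\bar y))$ are absorbed by the available non-negative co-coercivity and monotonicity contributions, so that the precise constant $\frac{1}{40\gamma L_F}$ emerges rather than a cruder multiple.
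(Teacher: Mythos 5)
Your handling of the first assertion is correct, and your route to the $o(1/\sqrt N)$ claim is a legitimate alternative to the paper's: you obtain $\sum_t\|x^{t+1}-x^t\|^2<\infty$ from the descent estimate \eqref{useful2} (with $\sigma=5L_F$, $L=6L_F$), the bound \eqref{ineqbd}, and the lower bound on $\P_\gamma$ from the proof of Theorem~\ref{thm2}, whereas the paper extracts the same square-summability from the last line of \eqref{keys} combined with \eqref{relation1}; both work. Your subgradient identity $\nabla F(y^{t+1})+\xi^{t+1}=\nu(y^{t+1}-z^{t+1})=\frac{\nu}{2}(x^t-x^{t+1})$ and the per-step gap estimate are also correct and coincide with the paper's intermediate computation in \eqref{keys}.

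The gap is in the telescoping step, which is exactly the piece you left as a sketch. Writing $d:=y^{t+1}-z^{t+1}$ and $a:=\nabla F(y^{t+1})-\nabla F(\bar y)$, your decomposition gives precisely
\[
\frac{\nu}{2}\langle x^t-x^{t+1},z^{t+1}-\bar z\rangle=\frac{\nu}{4}\bigl(\|x^t-\bar x\|^2-\|x^{t+1}-\bar x\|^2\bigr)-\gamma\nu\langle d,a\rangle-5\gamma\nu L_F\langle d,\,y^{t+1}-\bar y\rangle,
\]
and the last term equals $-\frac{5\gamma\nu L_F}{2}(\|y^{t+1}-\bar y\|^2-\|z^{t+1}-\bar z\|^2)-\frac{5\gamma\nu L_F}{2}\|d\|^2$. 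So the residual is \emph{not} reducible to a multiple of $\|d\|^2$: it reintroduces $\|y^{t+1}-\bar y\|^2-\|z^{t+1}-\bar z\|^2$, and the cross term $\langle d,a\rangle$ has no sign. If you feed in co-coercivity of $\nabla F$, monotonicity of $\partial G$ and Cauchy--Schwarz/Young, the resulting quadratic form in $(\|a\|,\|d\|)$ fails to be negative semidefinite once $\gamma L_F$ is small (the required inequality $600(\gamma L_F)^2-110\,\gamma L_F+1\le 0$ is violated for $\gamma L_F\lesssim 0.0096$), so the claimed domination does not close with the tools you list; and nothing in your argument produces the constant $\frac{1}{40\gamma L_F}$ --- your sketch silently jumps from $\frac{\nu}{4}$ to $\frac{\nu}{40\gamma L_F}$. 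The missing ingredient is the paper's inequality \eqref{relation1}, namely $\frac14(\|x^t-\bar x\|^2-\|x^{t+1}-\bar x\|^2)\ge 5\gamma L_F(\|y^{t+1}-\bar y\|^2-\|z^{t+1}-\bar z\|^2)$, which is obtained by applying monotonicity of the \emph{shifted} operators in \eqref{1storderop} at $(y^{t+1},\bar y)$ and $(z^{t+1},\bar z)$ to get \eqref{bd0} and then the three-point identity \eqref{rel2}. Once the gap is bounded by $\frac{\nu}{2}(\|y^{t+1}-\bar y\|^2-\|z^{t+1}-\bar z\|^2)$ plus a non-positive multiple of $\|d\|^2$ (as in \eqref{keys}), \eqref{relation1} converts this into the telescoping quantity and the factor $\frac{1}{40\gamma L_F}$ drops out; without it your argument does not close.
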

    \begin{proof}
    Since \eqref{appliedP} is convex, we conclude that $\bar z$ is actually optimal.
    We now establish the inequality \eqref{complexity}. First, from the first-order optimality conditions for the $y$ and $z$-updates in \eqref{scheme0}, we have
    \begin{equation}\label{1storderop}
      \begin{split}
        -\left(\beta L_F + \frac{1}{\gamma}\right)y^{t+1} + \frac{1}{\gamma}x^t & = \nabla F(y^{t+1}),\\
        \left(\beta L_F - \frac{1}{\gamma}\right) z^{t+1} - \frac{1}{\gamma}x^t + \frac{2}{\gamma}y^{t+1}& \in \partial G(z^{t+1}).
      \end{split}
    \end{equation}
    Moreover, it is not hard to see from the definition of cluster point and \eqref{cond1} that \eqref{1storderop} is also satisfied with $\bar x$ in place of $x^t$ and $(\bar y,\bar z)$ in place of $(y^{t+1},z^{t+1})$.
    Write $w^t_e = w^t - \bar w$ for $w = x$, $y$ or $z$ for notational simplicity. We have from \eqref{1storderop} (and its counterpart at $(\bar y,\bar z,\bar x)$) and the monotonicity of convex subdifferentials that
    \begin{equation*}
        \left\langle -\left(\beta L_F + \frac{1}{\gamma}\right)y^{t+1}_e + \frac{1}{\gamma}x^t_e,y^{t+1}_e\right\rangle  \ge 0,\ \
        \left\langle \left(\beta L_F - \frac{1}{\gamma}\right) z^{t+1}_e - \frac{1}{\gamma}x^t_e + \frac{2}{\gamma}y^{t+1}_e,z^{t+1}_e\right\rangle  \ge 0.
    \end{equation*}
    Summing these two relations and rearranging terms, we obtain that
    \begin{equation}\label{bd0}
      \langle x^t_e,y^{t+1} - z^{t+1}\rangle + 2\langle y_e^{t+1},z^{t+1}_e\rangle \ge (1 + \beta\gamma L_F)\|y^{t+1}_e\|^2 + (1 - \beta\gamma L_F)\|z^{t+1}_e\|^2.
    \end{equation}
    Next, observe that
    \begin{equation}\label{rel2}
    \begin{split}
      \langle x^t_e,y^{t+1} - z^{t+1}\rangle &= \frac12\langle x^t_e,x^t - x^{t+1}\rangle = \frac14 (\|x^t_e\|^2 + \|x^t - x^{t+1}\|^2 - \|x^{t+1}_e\|^2)\\
      & = \frac14(\|x^t_e\|^2 - \|x^{t+1}_e\|^2) + \|z^{t+1} - y^{t+1}\|^2\\
      & = \frac14(\|x^t_e\|^2 - \|x^{t+1}_e\|^2) + \|z_e^{t+1}\| + \|y_e^{t+1}\|^2 - 2\langle y_e^{t+1},z^{t+1}_e\rangle,
    \end{split}
    \end{equation}
    where the first and third equalities follow from the third relation in \eqref{scheme0}, the second equality follows from the elementary relation $\langle u,v\rangle = \frac{1}{2}(\|u\|^2 + \|v\|^2 - \|u - v\|^2)$ as applied to $u = x_e^t$ and $v = x^t - x^{t+1}$. Combining \eqref{rel2} with \eqref{bd0}, we see further that
    \begin{equation}\label{relation1}
      \frac14 \|x_e^t\|^2 - \frac14 \|x_e^{t+1}\|^2 \ge \beta\gamma L_F(\|y^{t+1}_e\|^2 - \|z^{t+1}_e\|^2)
    \end{equation}

    Next, using the fact that $\nabla F$ is Lipschitz continuous with modulus at most $L_F$, we have
    \begin{equation}\label{LF}
        F(z^{t+1}) \le F(y^{t+1}) + \langle \nabla F(y^{t+1}), z^{t+1} - y^{t+1}\rangle + \frac{L_F}{2}\|z^{t+1} - y^{t+1}\|^2.
    \end{equation}
    From this we see further that
    \begin{equation}\label{keys}
      \begin{split}
        & F(z^{t+1}) + G(z^{t+1}) - F(\bar z) - G(\bar z)\\
        & \le F(y^{t+1}) - F(\bar y) + G(z^{t+1}) - G(\bar z) + \langle \nabla F(y^{t+1}), z^{t+1} - y^{t+1}\rangle + \frac{L_F}{2}\|z^{t+1} - y^{t+1}\|^2\\
        & \le \langle \nabla F(y^{t+1}),y^{t+1}_e\rangle + \left\langle \left(\beta L_F - \frac{1}{\gamma}\right) z^{t+1} - \frac{1}{\gamma}x^t + \frac{2}{\gamma}y^{t+1},z^{t+1}_e\right\rangle\\
        & \ \ \ + \langle \nabla F(y^{t+1}), z^{t+1} - y^{t+1}\rangle + \frac{L_F}{2}\|z^{t+1} - y^{t+1}\|^2\\
        & = \left\langle \nabla F(y^{t+1}) + \left(\beta L_F - \frac{1}{\gamma}\right) z^{t+1} - \frac{1}{\gamma}x^t + \frac{2}{\gamma}y^{t+1},z^{t+1}_e\right\rangle + \frac{L_F}{2}\|z^{t+1} - y^{t+1}\|^2\\
        & = \left\langle -\left(\beta L_F - \frac{1}{\gamma}\right)y^{t+1} + \left(\beta L_F - \frac{1}{\gamma}\right) z^{t+1},z^{t+1}_e\right\rangle + \frac{L_F}{2}\|z^{t+1} - y^{t+1}\|^2\\
        & = \left(\frac{1}{\gamma} - \beta L_F\right)\langle y^{t+1} - z^{t+1},z^{t+1}_e\rangle  + \frac{L_F}{2}\|z^{t+1} - y^{t+1}\|^2\\
        & = \frac12\left(\frac{1}{\gamma} - \beta L_F\right) (\|y^{t+1}_e\|^2 - \|z^{t+1}_e\|^2) + \frac12\left((1 + \beta)L_F - \frac{1}{\gamma}\right)\|z^{t+1} - y^{t+1}\|^2\\
        & \le \frac12\left(\frac{1}{\gamma} - \beta L_F\right) (\|y^{t+1}_e\|^2 - \|z^{t+1}_e\|^2) \le \frac1{8\beta\gamma L_F}\left(\frac{1}{\gamma} - \beta L_F\right)(\|x_e^t\|^2 - \|x_e^{t+1}\|^2),
      \end{split}
    \end{equation}
    where: the first inequality follows from \eqref{LF} and the fact that $\bar z = \bar y$; the second inequality follows from the subdifferential inequalities applied to $F$ and $G$ at the points $y^{t+1}$ and $z^{t+1}$ respectively, and also the second relation in \eqref{1storderop}; the second equality follows from the first relation in \eqref{1storderop}; the fourth equality follows from the elementary relation $\langle u,v\rangle = \frac{1}{2}(\|u + v\|^2 - \|u\|^2 - \|v\|^2)$ as applied to $u = z^{t+1}_e$ and $v = y^{t+1} - z^{t+1}$; the second last inequality follows from the fact that $0 < \gamma < \frac{\beta -2}{(\beta + 1)^2L_F}$ so that $(1 + \beta)L_F - \frac{1}{\gamma} < 0$, while the last inequality follows from \eqref{relation1}.

    Summing both sides of \eqref{keys} from $t=0$ to $N - 1 \ge 0$ and using the convexity of $F + G$, we have
    \begin{equation*}
    \begin{split}
      F(\bar z^N) + G(\bar z^N) - F(\bar z) - G(\bar z) & \le \frac{1}{N}\sum_{t=0}^{N-1}(F(z^{t+1}) + G(z^{t+1}) - F(\bar z) - G(\bar z))\\
      & \le \frac1{8\beta\gamma NL_F}\left(\frac{1}{\gamma} - \beta L_F\right)\|x^0 - \bar x\|^2,
    \end{split}
    \end{equation*}
    where $\bar z^N$ is defined in the statement of the theorem. This proves \eqref{complexity}.

    Finally, observe from the last equality in \eqref{keys} that for all $t \ge 1$
    \begin{equation*}
    \begin{split}
      0&\le F(z^{t+1}) + G(z^{t+1}) - F(\bar z) - G(\bar z) \\
      &\le \frac12\left(\frac{1}{\gamma} - \beta L_F\right) (\|y^{t+1}_e\|^2 - \|z^{t+1}_e\|^2) + \frac12\left((1 + \beta)L_F - \frac{1}{\gamma}\right)\|z^{t+1} - y^{t+1}\|^2,
    \end{split}
    \end{equation*}
    where the first inequality follows from the optimality of $\bar z$. Rearranging terms in the above relation, we see further that
    \begin{equation*}
      \left(\frac{1}{\gamma} - (1 + \beta)L_F\right)\|z^{t+1} - y^{t+1}\|^2 \le \left(\frac{1}{\gamma} - \beta L_F\right) (\|y^{t+1}_e\|^2 - \|z^{t+1}_e\|^2).
    \end{equation*}
    Using this relation and the definition of the $x$-update, we obtain
    \[
    \begin{split}
     \frac{1}{4}\sum_{t=0}^{N-1}\|x^{t+1}-x^t\|^2&=	\sum_{t=0}^{N-1}\|z^{t+1} - y^{t+1}\|^2 \le \frac{\gamma}{1 - (1 + \beta)\gamma L_F}\left(\frac{1}{\gamma} - \beta L_F\right)\sum_{t=0}^{N-1}(\|y^{t+1}_e\|^2 - \|z^{t+1}_e\|^2) \\
     &\le \frac1{4\beta L_F (1-(1 + \beta) \gamma L_F)}\left(\frac{1}{\gamma} - \beta L_F\right)\|x^0 - \bar x\|^2,
    \end{split}
    \]
    where the last inequality is due to \eqref{relation1}. Thus, $\sum_{t=0}^{+\infty}\|x^{t+1}-x^t\|^2<+\infty$ and so, $\sum_{t=N}^{2N-1} \|x^{t+1}-x^t\|^2 \rightarrow 0$ as $N \rightarrow \infty$. Now consider $\alpha_N:= \min_{0 \le t \le N}\{\|x^{t+1} -x^t\|^2\}$ for all $N \ge 0$. Then, we have $\alpha_{N+1} \le \alpha_N$ for all $N \ge 0$ and,
\[
N \, \alpha_{2N} \le \alpha_N + \ldots \alpha_{2N-1} \le \sum_{t=N}^{2N-1} \|x^{t+1}-x^t\|^2 \rightarrow 0.
\]
This implies that $\alpha_N = o(1/N)$. Therefore, the conclusion follows.
    This completes the proof.
  \end{proof}

Next, we show that the PR splitting method exhibits linear convergence in solving \eqref{appliedP} if $G$ is convex and $F+G$ is strongly convex. We note that,
for the classical PR splitting method, linear convergence under strongly convexity is known; see \cite[Remark 10 and Proposition 4]{LionsM79}. As explained before, here we are considering a different PR splitting method.

  \begin{proposition}{\bf (Linear convergence under strong convexity)}
    Consider optimization problem \eqref{appliedP} with $G$ being convex. Suppose that $F+G$ is indeed strongly convex. Let $\{(y^t,z^t,x^t)\}$ be the sequence generated from \eqref{scheme0}. Then
   $\{(y^t,z^t,x^t)\}$ converges linearly to $(\bar y, \bar z,\bar x)$ with  $\bar y = \bar z$ and $\bar z$ being the unique optimal solution for \eqref{appliedP}, i.e., there exist $M>0$ and $r \in (0,1)$ such that for all  $t \ge 1$,
    $$\max\{\|y^{t}-\bar y\|^2, \|z^{t}-\bar z\|^2, \|x^{t}-\bar x\|^2\} \le M \, r^t.$$
\end{proposition}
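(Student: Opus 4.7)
The plan is to upgrade the estimates in the proof of the preceding complexity theorem by inserting the strong convexity modulus $\mu>0$ of $F+G$ at one point, thereby obtaining a geometric contraction for $\|x^t-\bar x\|^2$. Since strong convexity makes $F+G$ coercive with a unique minimizer $\bar z$, Theorem~\ref{thm2} guarantees boundedness of $\{(y^t,z^t,x^t)\}$ and Corollary~\ref{cor:1} identifies $\bar y=\bar z$ with this minimizer at any cluster point. Solving the fixed-point relations for \eqref{scheme0} at a cluster point gives $\bar x=(1+5\gamma L_F)\bar z+\gamma\nabla F(\bar z)$, so $\bar x$ is uniquely determined as well. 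I retain the notation $w^t_e:=w^t-\bar w$ for $w=x,y,z$ used in the previous proof.

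The chain of estimates through the penultimate line of \eqref{keys} still applies verbatim and reads
\[
F(z^{t+1})+G(z^{t+1})-F(\bar z)-G(\bar z)\le \tfrac12\left(\tfrac1\gamma-5L_F\right)\big(\|y^{t+1}_e\|^2-\|z^{t+1}_e\|^2\big)+\tfrac12\left(6L_F-\tfrac1\gamma\right)\|z^{t+1}-y^{t+1}\|^2.
\]
Strong convexity of $F+G$ at its minimizer $\bar z$ furnishes the lower bound $F(z^{t+1})+G(z^{t+1})-F(\bar z)-G(\bar z)\ge \tfrac{\mu}{2}\|z^{t+1}_e\|^2$. Since $\tfrac1\gamma>12L_F$, the coefficient $6L_F-\tfrac1\gamma$ is negative, so dropping that nonpositive term and rearranging yields the key comparison
\[
\|z^{t+1}_e\|^2\le \kappa\,\|y^{t+1}_e\|^2,\qquad \kappa:=\frac{\tfrac1\gamma-5L_F}{\mu+\tfrac1\gamma-5L_F}\in(0,1).
\]

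Finally, the first relation in \eqref{scheme0} together with its analog at $(\bar y,\bar x)$ gives $x^t_e=(1+5\gamma L_F)\,y^{t+1}_e+\gamma(\nabla F(y^{t+1})-\nabla F(\bar y))$. Expanding $\|x^t_e\|^2$ and using monotonicity of $\nabla F$ to drop the nonnegative cross term yields the upper resolvent bound, while the triangle inequality together with Lipschitzness $\|\nabla F(y^{t+1})-\nabla F(\bar y)\|\le L_F\|y^{t+1}_e\|$ yields the lower bound; together,
\[
\frac{1}{1+6\gamma L_F}\|x^t_e\|\le \|y^{t+1}_e\|\le \frac{1}{1+5\gamma L_F}\|x^t_e\|.
\]
Combining the lower bound with $\|y^{t+1}_e\|^2-\|z^{t+1}_e\|^2\ge (1-\kappa)\|y^{t+1}_e\|^2$ and inequality \eqref{relation1}, which reads $\|x^{t+1}_e\|^2\le \|x^t_e\|^2-20\gamma L_F(\|y^{t+1}_e\|^2-\|z^{t+1}_e\|^2)$, yields
\[
\|x^{t+1}_e\|^2\le\left(1-\frac{20\gamma L_F(1-\kappa)}{(1+6\gamma L_F)^2}\right)\|x^t_e\|^2=:r\,\|x^t_e\|^2
\]
with $r\in[0,1)$. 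Iterating gives $\|x^t_e\|^2\le r^t\|x^0_e\|^2$, after which the linear rates for $y^t$ and $z^t$ follow from the sandwich bound and the comparison $\|z^{t+1}_e\|^2\le\kappa\|y^{t+1}_e\|^2$, delivering the claimed conclusion with any $r\in[0,1)$ above and $M$ chosen accordingly. The crux is the one-line insertion of strong convexity, which upgrades the equality-type bound $\|z^{t+1}_e\|\le\|y^{t+1}_e\|$ buried in the complexity proof to the strict comparison with ratio $\kappa<1$; the lower resolvent bound on $\|y^{t+1}_e\|$ is what converts this into a genuine geometric rate for $\|x^t_e\|^2$ rather than a mere summability statement.
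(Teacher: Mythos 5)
Your proof is correct and takes essentially the same route as the paper's: both insert the strong convexity lower bound $\tfrac{\mu}{2}\|z^{t+1}_e\|^2\le F(z^{t+1})+G(z^{t+1})-F(\bar z)-G(\bar z)$ into \eqref{keys}, combine with \eqref{relation1} and the resolvent bound $\|x^t_e\|\le(1+6\gamma L_F)\|y^{t+1}_e\|$ to obtain a geometric contraction for $\|x^t_e\|^2$, and then transfer the rate to $y^t$ and $z^t$. The only cosmetic difference is that you first extract the explicit ratio $\kappa<1$ between $\|z^{t+1}_e\|^2$ and $\|y^{t+1}_e\|^2$, whereas the paper keeps everything in terms of the telescoping quantity $\|x^t_e\|^2-\|x^{t+1}_e\|^2$.
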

  \begin{proof}
  Let $(\bar y,\bar z,\bar x)$ be any cluster point of the sequence $\{(y^t,z^t,x^t)\}$. As before, we write $w^t_e = w^t - \bar w$ for $w = x$, $y$ or $z$ for notational simplicity. From the preceding theorem  $\bar y = \bar z$ and $\bar z$ is optimal for \eqref{appliedP}. Note that $F+G$ is strongly convex. Hence, the optimal solution of \eqref{appliedP} exists and is unique. Consequently, the whole sequence $\{(y^t,z^t)\}$ converges to the {\em unique} limit $(\bar z,\bar z)$, where $\bar z$ is the unique solution of \eqref{appliedP}. From this and \eqref{1storderop} one can deduce that $\{x^t\}$ is also convergent, and hence, converges to $\bar x$. We next establish linear convergence.

  Denote the strong convexity modulus of $F+G$ by $\sigma_1$.
From \eqref{keys}, the strong convexity of $F+G$ and the fact that $\bar z$ is the solution of \eqref{appliedP}, we see that for all $t \ge 1$,
\begin{equation}\label{eq:01}
\frac{\sigma_1}{2}\|z_e^{t+1}\|^2 \le F(z^{t+1}) + G(z^{t+1}) - F(\bar z) - G(\bar z) \le C(\|x_e^t\|^2 - \|x_e^{t+1}\|^2),
\end{equation}
where $C:=\frac1{8\beta\gamma L_F}\left(\frac{1}{\gamma} - \beta L_F\right)$. Moreover, from the last inequality in \eqref{keys}, we have for all $t \ge 1$,
\[
 C_1 (\|y_e^{t+1}\|^2 - \|z_e^{t+1}\|^2) \le  C(\|x_e^t\|^2 - \|x_e^{t+1}\|^2),
\]
where $C_1=\frac12\left(\frac{1}{\gamma} - \beta L_F\right)$. It then follows that
\[
\|y_e^{t+1}\|^2 -\frac{C}{C_1}(\|x_e^t\|^2 - \|x_e^{t+1}\|^2)\le \|z_e^{t+1}\|^2.
\]
This together with (\ref{eq:01}) gives us that for all $t \ge 1$,
\begin{equation} \label{eq:pp1}
  \|y_e^{t+1}\|^2 \le \left(\frac{2C}{\sigma_1} + \frac{C}{C_1}\right) (\|x_e^{t}\|^2 - \|x_e^{t+1}\|^2).
\end{equation}
On the other hand, note from the first relation in \eqref{1storderop} that
\[
-\left(\beta L_F + \frac{1}{\gamma}\right)y_e^{t+1} + \frac{1}{\gamma}x_e^{t}  = \nabla F(y^{t+1}) - \nabla F(\bar y).
\]
This together with the Lipschitz continuity of $\nabla F$ implies that
\[
-\left(\beta L_F + \frac{1}{\gamma}\right)\|y_e^{t+1}\| + \frac{1}{\gamma}\|x_e^{t}\| \le \|\nabla F(y^{t+1}) - \nabla F(\bar y)\| \le L_F \|y_e^{t+1}\|
\]
and consequently,
$\|x_e^{t}\| \le ((1 + \beta) \gamma L_F + 1) \|y_e^{t+1}\|$.
Thus, we obtain that, for all $t \ge 1$
\[
\frac{1}{((1 + \beta) \gamma L_F + 1)^2}\|x_e^t\|^2 \le \|y_e^{t+1}\|^2 \le \left(\frac{2C}{\sigma_1} + \frac{C}{C_1}\right) (\|x_e^t\|^2 - \|x_e^{t+1}\|^2).
\]
This shows that there exists $r \in (0,1)$ such that
$$\|x_e^{t+1}\|^2 \le  r \|x_e^t\|^2 \mbox{ for all } t \ge 1.$$
It follows that
\[
\|x_e^{t}\|^2 \le  \|x^0-\bar x\|^2 \, r^{t} \mbox{ for all } t \ge 1.
\]
Moreover, from (\ref{eq:01}) and (\ref{eq:pp1}), this further yields that, for all $t \ge 1$,
\[
\|z_e^{t+1}\|^2 \le \frac{2C }{\sigma_1} \|x_e^t\|^2 \le  \frac{2C \|x^0-\bar x\|^2}{\sigma_1} r^{t} .
\]
and
\[
  \|y_e^{t+1}\|^2
  \le \left(\frac{2C}{\sigma_1} + \frac{C}{C_1}\right) \|x_e^{t}\|^2 \le \left(\frac{2C}{\sigma_1} + \frac{C}{C_1}\right) \|x^0-\bar x\|^2  \, r^{t}.
\]
Therefore, the conclusion follows.
  \end{proof}

\section{Applications}\label{sec5}
  In this section, we apply the PR splitting method \eqref{scheme0} to solving two important class of nonconvex optimization problems: constrained least squares
  problem and feasibility problems, based on our discussion in Section~\ref{sec4}.

\paragraph{Constrained least squares problems.} A common type of problems that arises in the area of statistics and machine learning is the following constrained least squares problem:
\begin{equation}\label{LSP}
    \begin{array}{rl}
      \min\limits_{u\in D} & \frac12 \|Au - b\|^2,
    \end{array}
\end{equation}
where $A$ is a linear map, $b$ is a vector of suitable dimension, and $D$ is a nonempty {\em compact} set that is not necessarily convex. See \cite{KBC12,Tib96} for concrete examples of \eqref{LSP}.

The classical PR splitting method applied to \eqref{LSP} does not have a convergence guarantee. As an alternative, as discussed in Section~\ref{sec4}, we can set $f(y) = \frac{1}{2}\|Ay - b\|^2 + \frac{\beta\lambda_{\max}(A^TA)}{2}\|y\|^2$ and $g(z) = \delta_D(z) - \frac{\beta\lambda_{\max}(A^TA)}{2}\|z\|^2$ and apply the PR splitting method accordingly.

We next discuss computation of the proximal mappings. We start with the proximal mapping of $\gamma g$.
  From the definition, for each $w$, the proximal mapping gives the set of minimizers of
  \[
  \min_{z\in D}\left\{-\frac{\beta\lambda_{\max}(A^TA)}{2}\|z\|^2 + \frac{1}{2\gamma}\|z - w\|^2\right\}.
  \]
  It is clear that this set is given by $P_D\left(\frac{w}{1 - \beta\lambda_{\max}(A^TA)\gamma}\right)$ since $\gamma < \frac{1}{\beta\lambda_{\max}(A^TA)}$.
On the other hand, to compute the proximal mapping for $\gamma f$, we consider the following optimization problem for each $w$
\begin{equation*}
    \min_{y}\left\{\frac12 \|Ay - b\|^2 + \frac{\beta\lambda_{\max}(A^TA)}2 \|y\|^2 + \frac{1}{2\gamma}\|y - w\|^2\right\},
\end{equation*}
whose unique minimizer is given by
\[
  y = [(\beta\gamma\lambda_{\max}(A^TA) + 1)I + \gamma A^TA]^{-1}(w + \gamma A^Tb).
\]
Thus, the PR splitting method for \eqref{LSP} can be stated as follows:\\ \ \\
\fbox{\parbox{5.7 in}{
\begin{description}
\item {\bf PR splitting method for \eqref{LSP}}

\item[Step 0.] Input $x^0$, $\beta > 2$ and $\gamma \in \left(0,\frac{\beta -2}{(\beta + 1)^2\lambda_{\max}(A^TA)}\right)$.

\item[Step 1.] Set
\begin{equation}\label{scheme2}
\left\{
\begin{split}
&y^{t+1} = [(\beta\gamma\lambda_{\max}(A^TA) + 1)I + \gamma A^TA]^{-1}(x^t + \gamma A^Tb), \\
&z^{t+1}\in P_D\left(\frac{2y^{t+1} - x^t}{1 - \beta\lambda_{\max}(A^TA)\gamma}\right),\\
&x^{t+1}=x^t+2(z^{t+1}- y^{t+1}).
\end{split}
\right.
\end{equation}

\item[Step 2.] If a termination criterion is not met, go to Step 1.
\end{description}
}}
\vspace{.1 in}

As a consequence of Corollary \ref{cor:1}, we see that Algorithm \eqref{scheme2} generates a bounded sequence such that any of its cluster point gives a stationary point of \eqref{LSP}. We note that this global convergence result of \eqref{scheme2} is new even when $D$ is convex.

To illustrate our proposed approach, we now test the PR splitting method \eqref{scheme2} on solving \eqref{LSP}.
We compare our algorithm against the DR splitting method in \cite{LiPong14_2}. Our initialization and termination criteria for both algorithms are the same as in \cite[Section~5]{LiPong14_2}; both algorithms are initialized at the origin and terminated when
\begin{equation}\label{term_criterion}
\frac{\max\{\|x^t - x^{t-1}\|,\|y^t - y^{t-1}\|,\|z^t - z^{t-1}\|\}}{\max\{\|x^{t-1}\|,\|y^{t-1}\|,\|z^{t-1}\|,1\}}< tol
\end{equation}
for some $tol > 0$.
Note that, in general, the upper bound of $\gamma$ in algorithm \eqref{scheme2} might be too small in practical computation. Thus, following a technique used in \cite[Section~5]{LiPong14_2} for the DR splitting method, we adopt a heuristic for PR splitting method in our numerical simulation, which combines algorithm \eqref{scheme2} with a specific update rule of the parameter $\gamma$. In particular, we set $\beta = 2.2$ and start with $\gamma = 0.93/(\beta\lambda_{\max}(A^TA))$. We then update $\gamma$ as
$\max\{\frac\gamma 2,0.9999\cdot\gamma_1\}$ whenever $\gamma > \gamma_1 := \frac{\beta - 2}{(\beta + 1)^2\lambda_{\max}(A^TA)}$ and the sequence satisfies either $\|y^t-y^{t-1}\| > \frac{1000}t$ or $\|y^t\| > 10^{10}$. Following a similar discussion as in \cite[Remark~4]{LiPong14_2}, one can show that this heuristic leads to a bounded sequence which clusters at a stationary point of \eqref{LSP}. On the other hand, for the DR splitting method, we use the same heuristics described in \cite[Section~5]{LiPong14_2} for updating $\gamma$ but we consider three different initial $\gamma$'s: $k\cdot\gamma_0$ for $k=10$, $30$ and $50$, with $\gamma_0 = (\sqrt{\frac32} - 1)/\lambda_{\max}(A^TA)$. These variants are denoted by $\rm DR_{10}$, $\rm DR_{30}$ and $\rm DR_{50}$, respectively.

In our first numerical experiment, we first randomly generate an $m\times n$ matrix $A$, a noise vector $\epsilon\in\R^m$, and also an $\hat x\in \R^r$ with $r = \lceil \frac m{10}\rceil$, all with i.i.d. standard Gaussian entries. We further scale each column of $A$ to have norm $1$. Next, we generate a random sparse vector $\tilde x\in \R^n$ by first setting $\tilde x = 0$ and then assigning randomly $r$ entries in $\tilde x$ to be $\hat x$. Finally, we set $b = A\tilde x + 0.01\cdot\epsilon$ and $D = \{x\in \R^n:\; \|x\|_0 \le r,\ \|x\|_\infty\le 10^6\}$; here $\|x\|_0$ denotes the cardinality of $x$ and $\|x\|_\infty$ is the $\ell_\infty$ norm of $x$.

We generate $50$ random instances as described above for each pair of $(m,n)$, where $m\in \{100, 200, 300, 400,500\}$ and $n\in \{4000,5000,6000\}$. Our results are reported in Table~\ref{t}, where we present the number of iterations and the function value at termination\footnote{We choose $tol = 10^{-8}$, and we report $\frac12\|Az^t - b\|^2$ for both methods.} averaged over the $50$ instances. One can observe that the PR splitting method is faster than the DR splitting methods for larger $m$. Besides, the function values obtained by the PR splitting method are usually comparable with ${\rm DR_{30}}$, worse than ${\rm DR_{50}}$ and better than ${\rm DR_{10}}$.

\begin{table}[h]
\caption{Comparing ${\rm DR_{10}}$, ${\rm DR_{30}}$, ${\rm DR_{50}}$ and PR splitting for constrained least squares problem on random instances.}\label{t} \normalsize
\scriptsize
\begin{center}
\begin{tabular}{|r|r||r|c||r|c||r|c||r|c|}  \hline
\multicolumn{2}{|c||}{Data} & \multicolumn{2}{c||}{${\rm DR_{10}}$} & \multicolumn{2}{c||}{${\rm DR_{30}}$} & \multicolumn{2}{c||}{${\rm DR_{50}}$} & \multicolumn{2}{c|}{PR}
\\ \hline
$m$ & $n$ & ${\rm iter}$ & ${\rm fval}$ & ${\rm iter}$ & ${\rm fval}$ & ${\rm iter}$ & ${\rm fval}$ & ${\rm iter}$ & ${\rm fval}$
\\ \hline
   100 &  4000 &     805 & 5.00e-01 &     225 & 2.67e-01 &    274 & 7.73e-02 &    324 & 3.17e-01  \\
   100 &  5000 &     962 & 6.43e-01 &     252 & 4.96e-01 &    291 & 2.06e-01 &    370 & 4.95e-01  \\
   100 &  6000 &    1137 & 6.18e-01 &     326 & 5.02e-01 &    301 & 2.53e-01 &    436 & 4.76e-01  \\
   200 &  4000 &     508 & 5.32e-01 &     172 & 4.74e-02 &    217 & 9.20e-03 &    185 & 7.59e-02  \\
   200 &  5000 &     624 & 5.78e-01 &     195 & 6.93e-02 &    234 & 9.10e-03 &    224 & 2.06e-01  \\
   200 &  6000 &     723 & 6.93e-01 &     220 & 1.60e-01 &    250 & 8.94e-03 &    281 & 1.77e-01  \\
   300 &  4000 &     415 & 1.41e-01 &     141 & 1.33e-02 &    184 & 1.31e-02 &    123 & 1.39e-02  \\
   300 &  5000 &     489 & 2.70e-01 &     154 & 1.39e-02 &    201 & 1.35e-02 &    150 & 1.42e-02  \\
   300 &  6000 &     567 & 5.20e-01 &     170 & 1.36e-02 &    215 & 1.32e-02 &    187 & 1.44e-02  \\
   400 &  4000 &     322 & 4.35e-02 &     124 & 1.78e-02 &    166 & 1.75e-02 &     91 & 1.79e-02  \\
   400 &  5000 &     406 & 9.08e-02 &     137 & 1.77e-02 &    179 & 1.75e-02 &    115 & 1.83e-02  \\
   400 &  6000 &     481 & 1.48e-01 &     148 & 1.82e-02 &    194 & 1.77e-02 &    140 & 1.85e-02  \\
   500 &  4000 &     258 & 2.53e-02 &     114 & 2.26e-02 &    160 & 2.23e-02 &     75 & 2.27e-02  \\
   500 &  5000 &     314 & 2.97e-02 &     124 & 2.20e-02 &    166 & 2.17e-02 &     92 & 2.22e-02  \\
   500 &  6000 &     406 & 4.05e-02 &     135 & 2.25e-02 &    178 & 2.22e-02 &    112 & 2.27e-02  \\
\hline
\end{tabular}
\end{center}

\end{table}

We also perform experiments using real data. We consider four sets of real data for the $A$ and $b$ used in \eqref{LSP}: leukemia data, lymph node status data, breast cancer prognosis data and colon tumor gene expression data. We use the leukemia data pre-processed in \cite{Yeung05}, that has $3501$ genes and $72$ samples. The lymph node status data we use are pre-processed in \cite{Dobra09}, with 4514 genes and 148 samples. The breast cancer prognosis data we use are pre-processed in \cite{Yeung05}, containing 4919 genes and 76 samples. Finally, we use the data pre-processed in \cite{Golub99} with 2000 genes and 62 samples for the colon tumor gene expression data.

Similar to \cite[Section~3.3]{Lu12}, for all the data, we first standardize $A$ and $b$ to make each column have mean 0 and variance 1, and then scale the columns of $A$ to have unit norm. For the $A$ and $b$ thus constructed, we solve \eqref{LSP} with $D = \{x\in \R^n:\; \|x\|_0 \le r,\ \|x\|_\infty\le 10^6\}$ for $r = 10$, $20$, $30$ by the PR splitting method \eqref{scheme2} and compare it with ${\rm DR_{10}}$, ${\rm DR_{30}}$ and ${\rm DR_{50}}$.
Our numerical results are presented in Table~\ref{real},\footnote{We choose $tol = 10^{-5}$, and we report $\frac12\|Az^t - b\|^2$ for both methods.} where one can see that PR is slower than ${\rm DR_{50}}$ and faster than ${\rm DR_{10}}$. Moreover, it usually outperforms ${\rm DR}_{30}$ in terms of function values, and its speed is comparable with ${\rm DR_{30}}$ for the Breast and the Colon data.

\begin{table}[h]
\caption{Comparing ${\rm DR_{10}}$, ${\rm DR_{30}}$, ${\rm DR_{50}}$ and PR splitting on real data.}\label{real} \normalsize
\scriptsize
\begin{center}
\begin{tabular}{|r||c||r|c||r|c||r|c||r|c|}  \hline
 \multirow{2}{*}{Data} & \multirow{2}{*}{$r$}  & \multicolumn{2}{c||}{${\rm DR_{10}}$} & \multicolumn{2}{c||}{${\rm DR_{30}}$} & \multicolumn{2}{c||}{${\rm DR_{50}}$} & \multicolumn{2}{c|}{PR}
\\ 
\cline{3-10}
 & & ${\rm iter}$ & ${\rm fval}$ & ${\rm iter}$ & ${\rm fval}$ & ${\rm iter}$ & ${\rm fval}$ & ${\rm iter}$ & ${\rm fval}$
\\
\hline
          & 10 &  8242 & 2.40e+00 &  1805 & 3.92e+00 &  1229 & 3.92e+00 &  3461 & 2.47e+00\\
 Leukemia & 20 &  7890 & 2.32e+00 &  3727 & 6.09e-01 &  3065 & 5.81e-01 &  6608 & 3.05e-01\\
          & 30 & 12530 & 2.24e-01 &  5011 & 3.01e-01 &  2988 & 1.47e-01 &  8265 & 1.20e-01\\ \hline
          & 10 &  1345 & 2.93e+01 &   758 & 2.90e+01 &   496 & 2.90e+01 &  1297 & 2.76e+01\\
   Lymph  & 20 &  5912 & 2.26e+01 &  1910 & 1.91e+01 &   895 & 1.73e+01 &  2529 & 1.84e+01\\
          & 30 &  9354 & 7.91e+00 &  1883 & 1.34e+01 &   939 & 1.44e+01 &  2089 & 8.27e+00\\ \hline
          & 10 &  2338 & 1.28e+01 &  2705 & 9.33e+00 &  1095 & 8.40e+00 &  1656 & 1.33e+01\\
   Breast & 20 & 14359 & 2.90e+00 &  2345 & 3.53e+00 &  2824 & 4.11e+00 &  2906 & 2.81e+00\\
          & 30 &  9905 & 6.96e-01 &  5162 & 1.33e+00 &  3802 & 7.50e-01 &  8241 & 9.58e-01\\ \hline
          & 10 &  7072 & 8.08e+00 &  4313 & 8.08e+00 &  3352 & 8.08e+00 &  4463 & 8.08e+00\\
   Colon  & 20 & 14393 & 3.20e+00 &  7011 & 1.95e+00 &  9798 & 2.29e+00 &  6187 & 1.89e+00\\
          & 30 & 18361 & 7.17e-01 &  8952 & 6.45e-01 &  4922 & 7.26e-01 & 10937 & 1.33e+00\\ \hline
\end{tabular}
\end{center}

\end{table}

\paragraph{Feasibility problems.} Another important problem in optimization is the feasibility problem \cite{AtBoSv13,Bauschke_SIAM_review,Jon1993,Li,Luke1}. We consider the following simple version: finding a point in the intersection of a nonempty closed convex set $C$ and a nonempty {\em compact} set $D$. It is well known that this problem can be modeled via \eqref{appliedP} by setting $F(u) = \frac12 d_C^2(u)$ and $G(u) = \delta_D(u)$; see, for example, \cite{Luke08}. For this choice of $F$, we have $L_F = 1$.

As before, it can be shown that the proximal mapping of $\gamma g$ is given by $P_D\left(\frac{w}{1 - \beta\gamma}\right)$ since $\gamma < \frac1\beta$.
  We next compute the proximal mapping for $\gamma f$ in this case. From the definition, for each $w$, we consider the following optimization problem
  \begin{equation}\label{prox_f}
  \begin{split}
    v:=\min_{y}\left\{\frac12 d_C^2(y) + \frac{\beta}{2} \|y\|^2 + \frac{1}{2\gamma}\|y - w\|^2\right\} & = \min_{u\in C}\min_{y}\left\{\frac12 \|y - u\|^2 + \frac{\beta}{2} \|y\|^2 + \frac{1}{2\gamma}\|y - w\|^2\right\}.
  \end{split}
  \end{equation}
  Notice that the inner minimization on the right hand side is attained at
  \begin{equation}\label{y:expression}
  y = \frac{\gamma u + w}{(1 + \beta)\gamma + 1}.
  \end{equation}
  Plugging \eqref{y:expression} back into the \eqref{prox_f}, we see further that
  \begin{equation}\label{prox_f2}
    v = \frac{1}{((1 + \beta)\gamma + 1)^2}\min_{u\in C}\left\{\frac12 \| (1 + \beta\gamma)u - w\|^2 + \frac{\beta}{2} \|\gamma u + w\|^2 + \frac{\gamma}{2}\| u -(1 + \beta) w\|^2\right\}.
  \end{equation}
  It is routine to show that the minimum in \eqref{prox_f2} is attained at
  \[
  u = P_C\left(\frac{w}{1 + \beta\gamma}\right).
  \]
  Combining this with \eqref{y:expression}, the proximal mapping of $\gamma f$ at $w$ is given by
  \[
  \frac{\gamma P_C\left(\frac{w}{1 + \beta\gamma}\right) + w}{(1 + \beta)\gamma + 1}.
  \]
  Thus, the PR splitting method for \eqref{appliedP} with $F(u) = \frac12 d_C^2(u)$ and $G(u) = \delta_D(u)$ can be described as follows:\\ \ \\
\fbox{\parbox{5.7 in}{
\begin{description}
\item {\bf PR splitting method for \eqref{appliedP} with $F(u) = \frac12 d_C^2(u)$ and $G(u) = \delta_D(u)$}

\item[Step 0.] Input $x^0$, $\beta > 2$ and $\gamma \in \left(0,\frac{\beta -2}{(\beta + 1)^2}\right)$.

\item[Step 1.] Set
\begin{equation}\label{scheme3}
\left\{
\begin{split}
&y^{t+1} = \frac{\gamma P_C\left(\frac{x^t}{1 + \beta\gamma}\right) + x^t}{(1 + \beta)\gamma + 1}, \\
&z^{t+1}\in P_D\left(\frac{2y^{t+1} - x^t}{1 - \beta\gamma}\right),\\
&x^{t+1}=x^t+2(z^{t+1}- y^{t+1}).
\end{split}
\right.
\end{equation}

\item[Step 2.] If a termination criterion is not met, go to Step 1.
\end{description}
}}
\vspace{.1 in}

Similarly, as an immediate consequence of Corollary \ref{cor:1}, we see that Algorithm \eqref{scheme3} generates a bounded sequence such that any of its cluster point gives a stationary point of \eqref{appliedP}. We would like to point out that this global convergence result of \eqref{scheme3} is new even when $D$ is also convex.

As an illustration of our proposed approach, we now test the PR splitting method \eqref{scheme3} on solving \eqref{appliedP} with $F(u) = \frac12 d_C^2(u)$ and $G(u) = \delta_D(u)$ via MATLAB experiments.
We again benchmark our algorithm against the DR splitting method in \cite{LiPong14_2}. Both algorithms are initialized at the origin and terminated when \eqref{term_criterion} is satisfied with $tol = 10^{-8}$.
Also, as in the previous subsection, we adopt a heuristic for updating $\gamma$ following the technique used in \cite[Section~5]{LiPong14_2}. Specifically, for the PR splitting method \eqref{scheme3}, we set $\beta = 2.2$ and start with $\gamma = 0.93/\beta$  and update $\gamma$ as
$\max\{\frac\gamma 2,0.9999\cdot\gamma_1\}$ whenever $\gamma > \gamma_1 := \frac{\beta - 2}{(\beta + 1)^2}$, and the sequence satisfies either $\|y^t-y^{t-1}\| > \frac{1000}t$ or $\|y^t\| > 10^{10}$. Following a similar discussion as in \cite[Remark~4]{LiPong14_2}, this heuristic can be shown to give a bounded sequence that clusters at a stationary point of \eqref{appliedP}. On the other hand, for the DR splitting method, we adopt the same heuristics described in \cite[Section~5]{LiPong14_2} for updating $\gamma$ but we consider three different initial $\gamma$'s: $k\cdot\gamma_0$ for $k=50$, $100$ and $150$, with $\gamma_0 := \sqrt{\frac32} - 1$. These variants are denoted by $\rm DR_{50}$, $\rm DR_{100}$ and $\rm DR_{150}$, respectively.

As in \cite[Section~5]{LiPong14_2}, we consider the problem of finding an $r$-sparse solution of a randomly generated linear system $Ax = b$.
To be concrete, we set $C = \{x\in \R^n:\; Ax = b\}$ and $D = \{x\in \R^n:\; \|x\|_0 \le r,\ \|x\|_\infty\le 10^6\}$; here $\|x\|_0$ denotes the cardinality of $x$ and $\|x\|_\infty$ is the $\ell_\infty$ norm of $x$.
For the set $C$, we first generate an $m\times n$ matrix $A$ and an $\hat x\in \R^r$ with $r = \lceil \frac m{5}\rceil$, both with i.i.d. standard Gaussian entries. We then set $\tilde x$ to be the $n$-dimensional zero vector and randomly assign $r$ entries in $\tilde x$ to be $\hat x$. We further project this $\tilde x$ onto $[-10^6,10^6]^n$ so that $\tilde x\in D$. Finally, we set $b = A\tilde x$. Consequently, the intersection $C\cap D$ is nonempty for the instance generated because it contains $\tilde x$. In particular, this means that the globally optimal value of $\min_u\{\frac12d_C^2(u): u\in D\}$ is zero.

In our experiments, we generate $50$ random instances as described above for each pair of $(m,n)$, where $m\in \{100, 200, 300, 400,500\}$ and $n\in \{4000,5000,6000\}$. We report our results in Tables~\ref{t1} and \ref{t2}, where we present the number of iterations averaged over the $50$ instances, the largest and smallest function values at termination,\footnote{For both methods, we report $\frac12 d_C^2(z^t)$.} and also the number of successes and failures in identifying a sparse solution of the linear system.\footnote{We declare  a failure if the function value at termination is above $10^{-6}$, and a success if the value is below $10^{-12}$.} We also present the average number of iterations for successful instances (${\rm iter_s}$) and failed instances (${\rm iter_f}$).

In Table~\ref{t1}, we compare our PR splitting method with ${\rm DR}_{150}$. One can observe that this version of DR splitting method outperforms the PR splitting method in terms of the solution quality in this setting. However, the PR splitting method is consistently faster and its performance becomes comparable with the DR splitting method for easier instances (larger $m$ and smaller $n/m$).

We also present in Table~\ref{t2} the numerical results for ${\rm DR}_{50}$ and ${\rm DR}_{100}$. One can see that the DR splitting method becomes faster (while still slower than the PR splitting method) for these two smaller initial $\gamma$, at the price of fewer successful instances.

\begin{table}[h] \normalsize
\caption{Comparing ${\rm DR}_{150}$ and PR splitting on random instances.}\label{t1}
\scriptsize
\begin{center}
\begin{tabular}{|r|r||r|c|c|r|r|r|r||r|c|c|r|r|r|r|}  \hline
\multicolumn{2}{|c||}{Data} & \multicolumn{7}{c||}{${\rm DR_{150}}$} & \multicolumn{7}{c|}{PR}
\\ \hline
$m$ & $n$ & ${\rm iter}$ & ${\rm fval}_{\max}$ & ${\rm fval}_{\min}$ & succ &  fail & $\rm iter_{s}$ & $\rm iter_{f}$ & ${\rm iter}$ & ${\rm fval}_{\max}$ & ${\rm fval}_{\min}$ & succ &  fail & $\rm iter_{s}$ & $\rm iter_{f}$
\\ \hline
   100 &  4000 &    2073 & 3e-02 & 1e-16 & 36 & 14 &   1861 &   2617 &     297 & 6e-02 & 4e-05 &  0 & 50 &       - &    297\\
   100 &  5000 &    2931 & 3e-02 & 1e-16 & 12 & 38 &   1842 &   3275 &     367 & 5e-02 & 3e-05 &  0 & 50 &       - &    367\\
   100 &  6000 &    2014 & 2e-02 & 2e-16 &  5 & 45 &   1891 &   2028 &     431 & 5e-02 & 8e-08 &  0 & 49 &       - &    423\\
   200 &  4000 &     833 & 7e-02 & 3e-16 & 49 &  1 &    825 &   1219 &     189 & 2e-01 & 1e-15 & 15 & 35 &     227 &    173\\
   200 &  5000 &     970 & 5e-02 & 2e-16 & 48 &  2 &    947 &   1528 &     230 & 1e-01 & 2e-15 & 11 & 39 &     297 &    211\\
   200 &  6000 &    1254 & 4e-02 & 3e-16 & 44 &  6 &   1193 &   1704 &     277 & 1e-01 & 3e-15 &  4 & 46 &     344 &    271\\
   300 &  4000 &     607 & 3e-15 & 2e-16 & 50 &  0 &    607 &      - &     132 & 3e-01 & 9e-16 & 38 & 12 &     138 &    111\\
   300 &  5000 &     705 & 3e-15 & 3e-16 & 50 &  0 &    705 &      - &     163 & 2e-01 & 1e-15 & 24 & 26 &     181 &    146\\
   300 &  6000 &     819 & 3e-15 & 4e-16 & 50 &  0 &    819 &      - &     204 & 2e-01 & 2e-15 & 16 & 34 &     241 &    187\\
   400 &  4000 &     523 & 3e-15 & 5e-17 & 50 &  0 &    523 &      - &      95 & 2e-01 & 8e-16 & 44 &  6 &      96 &     91\\
   400 &  5000 &     574 & 4e-15 & 2e-16 & 50 &  0 &    574 &      - &     125 & 3e-01 & 1e-15 & 43 &  7 &     127 &    114\\
   400 &  6000 &     655 & 4e-15 & 5e-16 & 50 &  0 &    655 &      - &     156 & 3e-01 & 2e-15 & 27 & 23 &     165 &    145\\
   500 &  4000 &     500 & 2e-16 & 7e-19 & 50 &  0 &    500 &      - &     106 & 2e-01 & 6e-16 & 49 &  1 &      64 &   2173\\
   500 &  5000 &     521 & 1e-15 & 4e-17 & 50 &  0 &    521 &      - &      91 & 3e-01 & 1e-15 & 47 &  3 &      91 &     87\\
   500 &  6000 &     560 & 4e-15 & 4e-16 & 50 &  0 &    560 &      - &     123 & 3e-01 & 1e-15 & 47 &  3 &     124 &    108\\
\hline
\end{tabular}
\end{center}
\end{table}

\begin{table}[h] \normalsize
\caption{Computational results for ${\rm DR_{50}}$ and ${\rm DR_{100}}$.}\label{t2}
\scriptsize
\begin{center}
\begin{tabular}{|r|r||r|c|c|r|r|r|r||r|c|c|r|r|r|r|}  \hline
\multicolumn{2}{|c||}{Data} & \multicolumn{7}{c||}{${\rm DR_{50}}$} & \multicolumn{7}{c|}{${\rm DR_{100}}$}
\\ \hline
$m$ & $n$ & ${\rm iter}$ & ${\rm fval}_{\max}$ & ${\rm fval}_{\min}$ & succ &  fail & $\rm iter_{s}$ & $\rm iter_{f}$ & ${\rm iter}$ & ${\rm fval}_{\max}$ & ${\rm fval}_{\min}$ & succ &  fail & $\rm iter_{s}$ & $\rm iter_{f}$
\\ \hline
   100 &  4000 &     336 & 4e-02 & 6e-16 &  1 & 49 &    423 &    334 &     854 & 2e-02 & 2e-16 &  5 & 45 &     716 &    870\\
   100 &  5000 &     345 & 4e-02 & 3e-16 &  1 & 49 &    423 &    343 &     681 & 2e-02 & 4e-16 &  2 & 48 &     683 &    681\\
   100 &  6000 &     349 & 3e-02 & 5e-03 &  0 & 50 &      - &    349 &     647 & 2e-02 & 3e-16 &  1 & 49 &     715 &    646\\
   200 &  4000 &     331 & 1e-01 & 4e-16 & 17 & 33 &    351 &    321 &     711 & 7e-02 & 8e-17 & 48 &  2 &     669 &   1728\\
   200 &  5000 &     332 & 8e-02 & 9e-16 &  3 & 47 &    357 &    330 &     983 & 5e-02 & 1e-16 & 44 &  6 &     864 &   1857\\
   200 &  6000 &     341 & 7e-02 & 5e-16 &  6 & 44 &    396 &    333 &    1186 & 4e-02 & 1e-16 & 24 & 26 &     802 &   1540\\
   300 &  4000 &     319 & 2e-01 & 1e-16 & 45 &  5 &    315 &    353 &     489 & 3e-15 & 4e-16 & 50 &  0 &     489 &      -\\
   300 &  5000 &     332 & 1e-01 & 5e-16 & 29 & 21 &    335 &    328 &     545 & 3e-15 & 4e-16 & 50 &  0 &     545 &      -\\
   300 &  6000 &     341 & 1e-01 & 6e-16 & 16 & 34 &    378 &    323 &     674 & 5e-02 & 3e-16 & 49 &  1 &     651 &   1799\\
   400 &  4000 &     271 & 3e-15 & 9e-16 & 50 &  0 &    271 &      - &     405 & 4e-15 & 2e-16 & 50 &  0 &     405 &      -\\
   400 &  5000 &     301 & 1e-01 & 8e-16 & 48 &  2 &    296 &    413 &     453 & 4e-15 & 5e-16 & 50 &  0 &     453 &      -\\
   400 &  6000 &     329 & 1e-01 & 5e-16 & 40 & 10 &    330 &    329 &     516 & 4e-15 & 5e-16 & 50 &  0 &     516 &      -\\
   500 &  4000 &     244 & 5e-15 & 2e-16 & 50 &  0 &    244 &      - &     363 & 3e-15 & 2e-16 & 50 &  0 &     363 &      -\\
   500 &  5000 &     269 & 4e-15 & 7e-16 & 50 &  0 &    269 &      - &     404 & 5e-15 & 3e-16 & 50 &  0 &     404 &      -\\
   500 &  6000 &     295 & 5e-15 & 4e-16 & 50 &  0 &    295 &      - &     442 & 5e-15 & 9e-16 & 50 &  0 &     442 &      -\\
\hline
\end{tabular}
\end{center}
\end{table}

\section{Concluding remarks}\label{sec6}

In this paper, we studied the applicability of the PR splitting method for solving nonconvex optimization problems. We established global convergence of the method when applied to
minimizing the sum of a strongly convex Lipschitz differentiable function $f$ and a proper closed function $g$, under suitable assumptions. Exploiting the {\em possible nonconvexity} of $g$, we showed how to suitably apply the PR splitting method to a large class of convex optimization problems whose objective function is not necessarily strongly convex. This significantly broadens the applicability of the PR splitting method to cover feasibility problems and many constrained least squares problems.

\section*{Appendix: Concrete numerical examples}
In this appendix, we provide some simple and concrete examples illustrating the different behaviors of the classical PR splitting method, the classical DR splitting method and our proposed PR splitting method \eqref{scheme0}.

The first example shows that,  even in the convex setting, the classical PR splitting method can be faster than the classical DR splitting method, and our proposed PR method can outperform the classical DR method
for some particular choice of the parameter $\gamma$. The second example on nonconvex feasibility problem shows that the classical PR method can {\em diverge} while our
proposed PR method {\em converges linearly} to a solution for the feasibility problem.
\begin{example}{\bf (Classical DR splitting method vs classical/proposed PR method)} \label{ex:N1}
Consider $f(x)=\|x\|^2$ and $g(x)=0$ for all $x \in \R^n$. Then, a direct verification shows that, for any $\gamma>0$,
  \begin{equation*}
  {\rm prox}_{\gamma f}(z) = \argmin_u\left\{\gamma \|u\|^2 + \frac12 \|u - z\|^2\right\}=\frac{z}{2\gamma+1}
  \end{equation*}
  and
  \begin{equation*}
  {\rm prox}_{\gamma g}(z) = \argmin_u\left\{ \frac12 \|u - z\|^2\right\}=z.
  \end{equation*}
  Thus, the classical DR method reads
     \[
  x^{t+1}=\frac{I+(2{\rm prox}_{\gamma g} - I)\circ(2{\rm prox}_{\gamma f} - I)}{2}(x^t)=\frac{1}{2\gamma+1} x^t =\cdots= \left(\frac{1}{2\gamma+1}\right)^{t+1} x^0,
  \]
  while the classical PR method reads
  \[
  x^{t+1}=(2{\rm prox}_{\gamma g} - I)\circ(2{\rm prox}_{\gamma f} - I)(x^t)=\frac{1-2\gamma}{2\gamma+1} x^t =\cdots= \left(\frac{1-2\gamma}{2\gamma+1}\right)^{t+1} x^0.
  \]
Thus, for this example, the classical PR method converges faster than the classical DR method when $\gamma \in (0,1)$.

Moreover, let $\beta=2.5$ and $\gamma <\frac{\beta-2}{(\beta+1)^2L_F}=\frac1{49}$. Then, the proposed PR method \eqref{scheme0} reads  \begin{equation}
  \left\{
  \begin{split}
  &y^{t+1}=\argmin_{y} \left\{\frac{7}{2}\|y\|^2 + \frac{1}{2\gamma}\|y - x^t\|^2\right\}=\frac{1}{1+7\gamma} x^t, \\
&z^{t+1}=\argmin_{z} \left\{- \frac{5}{2}\|z\|^2 + \frac{1}{2\gamma}\|2y^{t+1} - x^t - z\|^2\right\}=\frac{1}{1-5\gamma}(2y^{t+1}-x^t),\\
&x^{t+1}=x^t+2(z^{t+1}- y^{t+1})=\left(1-\frac{4 \gamma}{(1-5 \gamma)(1+7\gamma)}\right)x^t.
\end{split}
\right.
\end{equation}
Note that, for $\gamma=0.01<\frac{\beta-2}{(\beta+1)^2L_F}=\frac1{49}$, we have
\[
0<1-\frac{4 \gamma}{(1-5 \gamma)(1+7\gamma)} \le 0.97 < \frac{1}{2\gamma+1}.
\]
Thus, for $\gamma=0.01$, our proposed PR method \eqref{scheme0} is faster than the classical DR method for this example.
\end{example}
 \begin{example}{\bf (classical PR method vs the proposed PR method)}
 Let $C=\{(0,0)\}$ and $D=\big(\{0\} \times \R\big) \cup \big(\R \times \{0\}\big)$. We consider the feasibility problem of finding a point in the intersection of $C$ and $D$.  We start with the initial point $x^0=(a,0)$ with $a\neq 0$. Then, the classical PR splitting method applies \eqref{scheme} to  $f(x)=\delta_C(x)$ and $g(x)=\delta_D(x)$ for all $x \in \R^2$, and
 reduces to
     \[
  x^{t+1}=(2{\rm prox}_{\gamma g} - I)\circ(2{\rm prox}_{\gamma f} - I)(x^t)=(2P_D - I)\circ(2P_C - I)(x^t)=-x^{t}.
  \]
 Thus, the classical PR splitting method diverges and cycles between two points $(a,0)$ and $(-a,0)$.
On the other hand, let $\beta=5$ and $\gamma \in \left(0,\frac{1}{12}\right)$ and consider the proposed PR method \eqref{scheme3} for feasibility problems. This algorithm reads

\begin{equation}\label{scheme6}
\left\{
\begin{split}
&y^{t+1} = \frac{\gamma P_C\left(\frac{x^t}{1 + \beta\gamma}\right) + x^t}{(1 + \beta)\gamma + 1}=\frac{x^t}{6\gamma + 1}, \\
&z^{t+1}\in P_D\left(\frac{2y^{t+1} - x^t}{1 - \beta\gamma}\right)=\left\{\frac{2y^{t+1} - x^t}{1 - 5\gamma}\right\},\\
&x^{t+1}=x^t+2(z^{t+1}- y^{t+1})=\left(1-\frac{2\gamma}{(1-5\gamma)(6\gamma+1)}\right)x^t,
\end{split}
\right.
\end{equation}
where the formula for the $z$-update follows from the fact that $x^t$, $y^t \in \R\times \{0\}\subset D$, and so is $2y^{t+1}-x^t$ by the construction.
Hence, the proposed PR method \eqref{scheme6} converges to $(0,0) \in C \cap D$ linearly in this case.
\end{example}

%


\begin{thebibliography}{99}

  \bibitem{AtBoReSo10}
  H. Attouch, J. Bolte, P. Redont and A. Soubeyran.
  \newblock Proximal alternating minimization and projection methods for nonconvex problems. An approach based on the Kurdyka-{\L}ojasiewicz inequality.
  \newblock {\em Math. Oper. Res.} 35, pp. 438--457 (2010).


  \bibitem{AtBoSv13}
  H. Attouch, J. Bolte and B. F. Svaiter.
  \newblock Convergence of descent methods for semi-algebraic and tame problems: proximal algorithms, forward-backward splitting, and regularized Gauss-Seidel methods.
  \newblock {\em Math. Program.} 137, pp. 91--129 (2013).

  \bibitem{Jon1993}
 H.~H. Bauschke and J.~M. Borwein.
 \newblock On the convergence of von Neumann's alternating projection algorithm for two sets.
 \newblock {\em Set-Valued Anal.} 1, pp. 185--212 (1993).

  \bibitem{Bauschke_SIAM_review}
H.~H. Bauschke and J.~M. Borwein.
\newblock On projection algorithms for solving convex feasibility problems.
\newblock {\em SIAM Rev.} 38, pp. 367--426 (1996).


  \bibitem{BauCom11}
H.~H. Bauschke and P.~L. Combettes.
\newblock {\em Convex Analysis and Monotone Operator Theory in Hilbert Spaces}.
\newblock Springer (2011).

  \bibitem{BvdBSC13}
  M. Bogdan, E. van den Berg, W. Su and E. Cand\`es.
  \newblock Statistical estimation and testing via the sorted L1 norm.
  \newblock Preprint (2013). Available at \verb+http://arxiv.org/abs/1310.1969+.

  \bibitem{BolDanLew07}
  J.~Bolte, A.~Daniilidis and A.~Lewis.
  \newblock The {\L}ojasiewicz inequality for nonsmooth subanalytic functions with applications to subgradient dynamical systems.
  \newblock {\em SIAM J. Optim.}  17,  pp. 1205--1223 (2007).

  \bibitem{BolDanLewShi07}
  J.~Bolte, A.~Daniilidis, A.~Lewis and M. Shiota.
  \newblock Clarke subgradients of stratifiable functions.
  \newblock {\em SIAM J. Optim.}  18,  pp. 556--572 (2007).

    \bibitem{Li}
  J.~M.~Borwein, G.~Li and L.~J.~Yao.
  \newblock Analysis of the convergence rate for the cyclic projection algorithm applied to basic semialgebraic convex sets.
  \newblock {\em SIAM J. Optim.} 24, pp. 498--527 (2014).

    \bibitem{BDE}
A. M. Bruckstein, D. L. Donoho and M. Elad.
\newblock From sparse solutions of systems of equations
to sparse modeling of signals and images.
\newblock {\em SIAM Rev.} 51, pp. 34--81 (2009).

  \bibitem{CaTa05}
E.~Cand\`es and T.~Tao.
\newblock The Dantzig selector: statistical estimation when $p$ is much
larger than $n$.
\newblock {\em Ann. Statist.} 35, pp. 2313--2351 (2007).


  \bibitem{Comb09}
  P. L. Combettes.
  \newblock Iterative construction of the resolvent of a sum of maximal monotone operators.
  \newblock {\em J. Convex Anal.} 16, pp. 727--748 (2009).

\bibitem{Comb12}
P. L. Combettes and J.-C. Pesquet.
\newblock A Douglas-Rachford splitting approach to nonsmooth convex variational signal recovery.
\newblock {\em IEEE J. Sel. Topics Signal Process.} 1, no. 4, pp. 564--574, (2007).

\bibitem{Dobra09}
A. Dobra.
\newblock Variable selection and dependency networks for genomewide data.
\newblock {\em Biostatistics.} 10, pp. 621–-639 (2009).

\bibitem{PDE}
  J.~Douglas and H.~H.~Rachford.
\newblock  On the numerical solution of heat conduction problems in two or three space variables.
\newblock {\em T. Am. Math. Soc.} 82, pp. 421--439 (1956).
%

  \bibitem{Eckstein_Bertsekas}
  J.~Eckstein and D.~P. Bertsekas.
  \newblock On the Douglas-Rachford splitting method and the proximal
point algorithm for maximal monotone operators.
\newblock {\em Math. Program.} 55, pp. 293--318 (1992).

\bibitem{FanJ}
J. Fan and R. Li.
\newblock Variable selection via nonconcave penalized
likelihood and its oracle properties.
\newblock {\em J. Amer. Statist. Assoc.} 96, pp. 1348--1360 (2001).

\bibitem{Boyd2015}
P. Giselsson and S. Boyd.
\newblock Diagonal scaling in Douglas-Rachford splitting and ADMM.
\newblock In {\em Proc. of the 53rd IEEE Conf. on Decision and Contr.}, pp. 5033--5039 (2014).


\bibitem{Golub99}
T. R. Golub,  D. K. Slonim, P. Tamayo, C. Huard, M. Gaasenbeek, J. P. Mesirov, H. Coller, M. L. Loh, J. R. Downing, M. A. Caligiuri, C. D. Bloomfield and E. S. Lander.
\newblock Molecular classification of cancer: class discovery and class prediction by gene expression monitoring.
\newblock {\em Science.} 286, pp. 531--537 (1999).



\bibitem{Hong14}
M. Hong, Z.-Q. Luo and M. Razaviyayn.
\newblock Convergence analysis of alternating direction method of multipliers for a family of nonconvex problems.
\newblock {\em SIAM J. Optim.} 26, pp. 337--364 (2016).

  \bibitem{Lu12}
Z. Lu, T.K. Pong and Y. Zhang.
\newblock An alternating direction method for finding Dantzig selectors.
\newblock {\em Comput. Stat. Data An.} 56, pp. 4037--4046 (2012).




  \bibitem{KnF00}
K.~Knight and W.~Fu.
\newblock Asymptotics for the lasso-type estimators.
\newblock {\em Ann. Statist.} 28, pp. 1356--1378 (2000).

\bibitem{KBC12}
A. Kyrillidis, S. Becker, V. Cevher and C. Koch.
\newblock Sparse projections onto the simplex.
\newblock {\em JMLR W\&CP} 28, pp. 235--243 (2013).

  \bibitem{LiPong14}
  G. Li and T. K. Pong.
  \newblock Global convergence of splitting methods for nonconvex composite optimization.
  \newblock {\em SIAM J. Optim.} 25, pp. 2434--2460 (2015).

  \bibitem{LiPong14_2}
  G. Li and T. K. Pong.
  \newblock Douglas-Rachford splitting for nonconvex optimization with application to nonconvex feasibility problems.
  \newblock {\em Math. Program.} 159, pp. 371--401 (2016).

  \bibitem{LionsM79}
  P. L. Lions and B. Mercier.
  \newblock Splitting algorithms for the sum of two nonlinear operators.
  \newblock {\em SIAM J. Numer. Anal.} 16, pp. 964--979 (1979).

  \bibitem{Luke08}
  D. R. Luke,
  \newblock Finding best approximation pairs relative to a convex and a prox-regular set in a Hilbert space.
  \newblock {\em SIAM J. Optim.} 19, pp. 714--739 (2008).

  \bibitem{Luke1}
R. Hesse and D. R. Luke.
\newblock Nonconvex notions of regularity and convergence of fundamental algorithms for feasibility problems.
\newblock {\em SIAM J. Optim.} 23, pp. 2397--2419 (2013).



  \bibitem{PaStBe14}
  P. Patrinos, L. Stella and A. Bemporad.
  \newblock Douglas-Rachford splitting: complexity estimates and accelerated variants.
  \newblock In {\em Proc. of the 53rd IEEE Conf. on Decision and Contr.}, pp. 4234--4239 (2014).

  \bibitem{PR55}
  D. W. Peaceman and H. H. Rachford.
  \newblock The numerical solution of parabolic and elliptic differential equations.
  \newblock {\em J. Soc. Ind. Appl. Math.} 3, pp. 28--41 (1955).


  \bibitem{Rock98}
  R. T. Rockafellar and R. J.-B. Wets.
  \newblock {\em Variational Analysis}.
  \newblock Springer (1998).

  \bibitem{Tib96}
R.~Tibshirani.
\newblock Regression shrinkage and selection via the lasso.
\newblock {\em J. R. Statist. Soc. B}  58, pp. 267--288 (1996).


\bibitem{WangBan13}
  H. Wang and A. Banerjee.
  \newblock Bregman alternating direction method of multipliers.
  \newblock {\em NIPS} 27, pp. 2816--2824 (2014).

\bibitem{Yeung05}
  K. Y. Yeung, R. E. Bumgarner and A. E. Raftery.
  \newblock Bayesian model averaging: development of an improved multi-class, gene selection and classification tool for microarray data. \newblock {\em Bioinformatics.} 21, pp. 2394--2402 (2005).



  \end{thebibliography}
\end{document}